\documentclass[a4paper,12pt]{amsart}
\usepackage[colorlinks=true,urlcolor=blue,
citecolor=red,linkcolor=blue,linktocpage,pdfpagelabels,
bookmarksnumbered,bookmarksopen]{hyperref}
\usepackage[active]{srcltx}
\usepackage{verbatim}
\usepackage{epsfig,graphicx,color,mathrsfs}
\usepackage{graphicx}
\usepackage{amsmath,amssymb,amsthm,amsfonts}
\usepackage{amssymb}
\usepackage[english]{babel}
\usepackage[scrtime]{prelim2e}
\usepackage[hyperpageref]{backref}
\usepackage[left=2.7cm,right=2.7cm,top=3cm,bottom=3cm]{geometry}

\newtheorem{thm}{Theorem}[section]

\newtheorem{lem}[thm]{Lemma}

\newtheorem{rem}[thm]{Remark}
\newtheorem{defn}[thm]{Definition}

\newcommand{\R}{\mathbb R}

\newcommand{\n}{\nabla}
\newcommand{\la}{\langle}
\newcommand{\ra}{\rangle}

\newcommand{\D}{\hbox{D}}

\begin{document}

\parindent 0pc
\parskip 6pt
\overfullrule=0pt

\title[Rigidity results]
{Rigidity results via a Poincaré type formula for stable solutions of quasilinear anisotropic elliptic equations}

\author{Berardino Sciunzi$^{*}$ and Domenico Vuono$^{*}$}
\address{$^{*}$Dipartimento di Matematica e Informatica, UNICAL, Ponte Pietro  Bucci 31B, 87036 Arcavacata di Rende, Cosenza, Italy}
\email{sciunzi@mat.unical.it, domenico.vuono@unical.it}
\subjclass[2020]{35J62, 35B07}
\thanks{The authors are supported by PRIN PNRR P2022YFAJH \emph{Linear and Nonlinear PDEs: New directions and applications} and by Gruppo Nazionale per l'Analisi Matematica, la Probabilità. B. Sciunzi and D. Vuono have been partially supported by \emph{INdAM-GNAMPA Esistenza, regolarità e proprietà qualitative per problemi non lineari} E5324001950001.}
\keywords{Anisotropic elliptic equations, Finsler geometry}

\date{\today}

\date{\today}

\begin{abstract}
In this paper we prove some one-dimensional symmetry results for stable solutions to quasilinear anisotropic elliptic equations in $\R^2$ and $\R^3$. In this regard, we give an extension of a formula obtained in \cite{SZ1,SZ2, FSV}.
The techniques used are inspired by the works of \cite{Fhdr,FSV}.
\end{abstract}

\maketitle

\section{Introduction}

We consider $u\in C^{1}(\R^N)$ weak solutions to the problem 
\begin{equation}\label{equazione}
-\operatorname{div}\left(B'(H(\nabla u))\nabla H (\nabla u)\right)=f(u) \quad \text{on }\R^n,
\end{equation}
As usual the weak formulation of \eqref{equazione} is given by
\begin{equation}\label{formulazionedebole}
    \int_{\R^N} B'(H(\nabla u))\langle \nabla H(\nabla u),\nabla \varphi\rangle =\int_{\R^N}f(u)\varphi, \quad \varphi \in C^1_c(\R^N).
\end{equation}

Hereafter, $H\in C^{3,\beta}_{loc}(\R^N \setminus \{0\})$ is a Finsler norm (see Section $2$ for details),  

$f\in C^{1,\beta}_{loc}(0,+\infty)$ and $B$ satisfies
\begin{itemize}
    \item[(h$_B$)]
    \begin{enumerate}
        \item[(i)] $B \in C^{3,\beta}_{loc}(0,+\infty)\cap C^1([0,\infty))$; 

        \item[(ii)] $B(0)=B'(0)=0,\quad B(t), B'(t), B''(t)>0 \quad \forall t \in (0,+\infty)$;

        \item[(iii)] there exist $p>1$, $\kappa\in [0,1)$, $\gamma > 0$, $\Gamma > 0$ such that
        \begin{equation}\label{hp:boundAboveBelow}
            \begin{split}
               \gamma (\kappa+t)^{p-2}t \leq & B'(t) \leq \Gamma(\kappa+ t)^{p-2}t \quad \forall\,t>0\\
                \gamma(p-1)(\kappa+t)^{p-2} \leq & B''(t) \leq \Gamma (p-1) (\kappa+t)^{p-2}\quad \forall\,t>0\,.
            \end{split}
        \end{equation}
    \end{enumerate}
\end{itemize}
For $H(\xi)=|\xi|$ and $B(t)=\frac{t^{p}}{p}$, the operator at left-hand side of \eqref{equazione} boils down to the usual $p-$Laplace operator.

Note that under those assumptions the considered problem is well defined\footnote{The function $B\circ H$ is of class $C^1(\R^N)$, see \cite[Lemma $2.3$]{CoFaVa2}}.
Under assumption $(h_B)$-$(iii)$, by standard regularity results, see \cite{DB,T,CoFaVa1,CoFaVa2}, weak solutions of the equation \eqref{formulazionedebole} belong to $C^{1,\alpha}_{loc}(\R^N)\cap C^2(\R^N\setminus Z_u)$, where $Z_u$ is the critical set where the gradient of a solution vanishes. We refer to \cite{Alberto,Alberto2,DS1,CaRiSc} for results involving the stress field of the solutions.

We define the following matrix
$A: \R^N\setminus \{0\}\rightarrow \operatorname{Mat}(N\times N)$ by 
\begin{equation}\label{matrice}
    A_{ij}(\xi):=B''(H(\xi))\partial_iH(\xi)\partial_jH(\xi)+B'(H(\xi))\partial_{ij} H(\xi),
\end{equation}
for $i,j=1,...,N$

For $k=1,2$, we set 
\begin{equation}
    \lambda_1(t):=B''(t), \quad \lambda_2(t)=\cdot\cdot\cdot=\lambda_N:=\frac{B'(t)}{t}.
\end{equation}

and, for any $t\in \R$, 
\begin{equation}
    F(t):=\int_0^t f(s)ds
\end{equation}
and\footnote{Under assumptions $(h_B)$, we note that $B'(t), B''(t)t\in C([0,+\infty))$. Therefore, the quantities $\Lambda_k$ are well-defined.} 
\begin{equation}\label{definizioni}
    \Lambda_k(t):=\int_0^t\lambda_k(|s|)sds.
\end{equation}

We remark that solutions of \eqref{formulazionedebole} are critical points of the functional 
\begin{equation}
    \int \Lambda_2(H(\nabla u))-F(u) dx.
\end{equation}

We then discuss the notion of stability for weak solutions to \eqref{equazione}. This is a rather delicate question, because our framework is very general. We address this in the following definition. 

\begin{defn}\label{stabilità2}
Given a domain $\Omega\subset \R^N$, we say that a weak solution $u\in C^{1}(\Omega)$ is stable if
\begin{equation}\label{stabilità}
\begin{split}
    L_u(\varphi,\varphi)&= \int_\Omega \langle A(\nabla u)\nabla \varphi,\nabla \varphi\rangle-f'(u)\varphi ^2 \\ &=\int_\Omega B''(H(\nabla u)) \langle \nabla H(\nabla
u), \nabla \varphi \rangle^2 \\&+
\int_{\Omega}B'(H(\nabla
u))\langle D^2H(\nabla u)\nabla \varphi,\nabla \varphi \rangle
-f'(u)  \varphi^2 \ge 0,
\end{split}
\end{equation}
for every $\varphi\in C^1_c(\Omega)$.
\end{defn}

\begin{rem}
	Note that the left hand side in Definition \ref{stabilità2} might be possibly infinity and the inequality is understood in this meaning. Furthermore, the anisotropic nature of our operator, causes that some care is needed to define the bilinear form on the critical set, we refer to \cite{CoFaVa2,CoFaVa1}. Computations are simpler in any case 
 and well-defined when  the support of the test function $\varphi$ is far from $Z_u$ ($\varphi\in C^1_c(\Omega\setminus \{\nabla u=0\})$) and  this is what we shall really exploit in our proofs. 	
\end{rem}

In this paper we will be interested in rigidity properties, such as Liouville-type theorems and one-dimensionality results. 

For deep developments in  the study of Liouville-type theorems for  stable solutions, possibly unbounded and 
sign-changing, we refer to \cite{DFSV,DF,Farina1,Farina2,FWM}.

Our first result is the following: 

\begin{thm}\label{DegiorgiN=2}
    Let $N=2$. Let $u\in C^{1}(\R^2)$ be a weak stable solution of \eqref{equazione}, with $\nabla u\in L^{\infty}(\R^2).$  Then there exist $u_0 :\R\rightarrow \R$ and $w \in S^{1}$ such that $u(x)=u_0(w\cdot x)$ for any $x\in \R^2$.
\end{thm}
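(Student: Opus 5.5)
We follow the Sternberg--Zumbrun / Farina--Sciunzi--Valdinoci strategy and prove a Poincaré type formula for the linearized operator. The key step is an anisotropic geometric identity. For a stable solution $u$ and any $\varphi\in C^1_c(\R^2\setminus Z_u)$ (later extended by approximation to Lipschitz test functions), we want
\begin{equation*}
\int_{\R^2}\Big(\langle A(\nabla u)\nabla|\nabla u|_\ast,\nabla |\nabla u|_\ast\rangle\ \text{-type term}\Big)\varphi^2 + \int (\text{curvature term})\,\varphi^2 \le \int \langle A(\nabla u)\nabla\varphi,\nabla\varphi\rangle\,\Phi(\nabla u),
\end{equation*}
where the left-hand side is nonnegative and vanishes if and only if the level sets of $u$ are flat, i.e. $\nabla u/|\nabla u|$ is locally constant on $\{\nabla u\neq 0\}$.

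To obtain it, we plug $\varphi\, u_j$ (or $\varphi\,\partial_j u$ suitably weighted) into the stability inequality \eqref{stabilità}. We use the linearized equation $\operatorname{div}(A(\nabla u)\nabla u_j)+f'(u)u_j=0$, which holds weakly on $\R^2\setminus Z_u$ by the $C^2$ regularity there. Summing over $j$ gives
\begin{equation*}
\sum_j\int \langle A\nabla u_j,\nabla u_j\rangle\varphi^2-\langle A\nabla|\nabla u|,\nabla|\nabla u|\rangle\varphi^2\le \int |\nabla u|^2\langle A\nabla\varphi,\nabla\varphi\rangle .
\end{equation*}
We then decompose $D^2u$ into tangential and normal parts along the level curves. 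Using the explicit form \eqref{matrice} of $A$, with eigen-structure governed by $\lambda_1=B''$ and $\lambda_2=B'(t)/t$ in the $H$-geometry, the left side becomes a sum of nonnegative terms. These are the tangential gradient of $|\nabla u|$ and the anisotropic curvature of the level set, both weighted by quantities comparable to $(\kappa+|\nabla u|)^{p-2}$ by (h$_B$)-(iii) and the uniform convexity of $H$. This algebra, done carefully with $D^2H$ (which is degenerate in the direction $\xi$ and positive on its complement), is where I expect the main difficulty. Choosing the right vector field, possibly $\nabla H(\nabla u)$-related quantities instead of $u_j$, may be needed for the anisotropic terms to be positive. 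We would also handle $Z_u$ by cutting off with $\varphi\,G_\varepsilon(|\nabla u|)$ and letting $\varepsilon\to0$, which uses that the weight $|\nabla u|^2(\kappa+|\nabla u|)^{p-2}$ vanishes at $Z_u$.

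Next comes the log-cutoff argument specific to $N=2$. Since $\nabla u\in L^\infty$, the right-hand side is bounded by $C\int|\nabla\varphi|^2$. We choose the standard logarithmic cutoff $\varphi_R=1$ on $B_R$, $\varphi_R=2-\log|x|/\log R$ on $B_{R^2}\setminus B_R$, and $\varphi_R=0$ outside $B_{R^2}$. Then $\int|\nabla\varphi_R|^2\le C/\log R\to0$. Hence the nonnegative left-hand side vanishes identically on $\{\nabla u\ne0\}$. So on each connected component of $\{\nabla u\neq0\}$ the level sets are straight lines and $|\nabla u|$ is constant along them, which means $\nabla u/|\nabla u|$ is locally constant there.

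Finally we conclude one-dimensionality. If $\nabla u\equiv0$, the claim is trivial. Otherwise, on a component $\mathcal{U}$ of $\{\nabla u\neq0\}$ the direction $w$ is constant, so $u$ is a function of $w\cdot x$ in $\mathcal U$. Then $u=u_0(w\cdot x)$ in a strip, and by $C^1$ continuity $\nabla u=0$ on the boundary lines of the strip. We would rule out mismatched directions across components by a unique-continuation-free argument, following \cite{FSV}. The level sets are full lines, the boundary of $\mathcal U$ consists of lines parallel to $w$, and the complement components are strips or half-planes on which $u$ is constant or again one-dimensional with the same direction, since two nonparallel families of lines would intersect. This yields $u(x)=u_0(w\cdot x)$ on all of $\R^2$.
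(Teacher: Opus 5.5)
Your overall route is the paper's: test stability with (a truncation of) $|\nabla u|\varphi$, test the linearized equation with $u_j\varphi^2$, cancel the cross terms via $|\nabla u|\nabla|\nabla u|=\sum_j u_j\nabla u_j$, then apply the logarithmic cutoff and an FSV-type geometric argument. The last two steps are fine, and your line-extension argument is more explicit than the paper's. One slip: testing stability with $u_j\varphi$, as you literally write, only gives the trivial $\int u_j^2\langle A\nabla\varphi,\nabla\varphi\rangle\ge 0$; your displayed inequality comes from $|\nabla u|\varphi$.

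The genuine gap is the central step. You assert without proof that $\sum_j\langle A\nabla u_j,\nabla u_j\rangle-\langle A\nabla|\nabla u|,\nabla|\nabla u|\rangle$ is a sum of nonnegative terms controlling the curvature and a tangential derivative, and you even suspect $u_j$ must be replaced. This anisotropic Sternberg--Zumbrun estimate (Lemma \ref{Zumbrun} together with \eqref{normatangenziale}) is exactly the paper's new ingredient. Without it, nothing forces the left-hand side to be nonnegative, let alone to vanish only on one-dimensional profiles.

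The missing computation is short once you split $A(\xi)=B''(H(\xi))\nabla H(\xi)\otimes\nabla H(\xi)+B'(H(\xi))D^2H(\xi)$ at $\xi=\nabla u$, with $\nu=\nabla u/|\nabla u|$.
\begin{itemize}
\item \emph{The $B''$ part.} Since $\langle\nabla H(\nabla u),\nabla u_j\rangle=\partial_j[H(\nabla u)]$ and $\langle\nabla H(\nabla u),\nabla|\nabla u|\rangle=\langle\nabla[H(\nabla u)],\nu\rangle$, it contributes exactly $B''(H(\nabla u))|\nabla_{L_{u,x}}H(\nabla u)|^2$. This is the tangential gradient of $H(\nabla u)$, not of $|\nabla u|$.
\item \emph{The $B'$ part.} Because $D^2H(\xi)\xi=0$, all normal components drop out. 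In an orthonormal frame $(\tau,\nu)$, with $h=\langle D^2H(\nabla u)\tau,\tau\rangle$, the difference is $h(u_{\tau\tau}^2+u_{\nu\tau}^2-u_{\tau\nu}^2)=h\,u_{\tau\tau}^2=h|\nabla u|^2k^2$. Moreover $h\ge\Lambda/H(\nabla u)$ by uniform convexity and $(-1)$-homogeneity; ellipticity on $\nabla H(\xi)^\perp$ passes to $\xi^\perp$ precisely because $D^2H(\xi)\xi=0$.
\end{itemize}
So $u_j$ works as it is. Once $k=0$, Euler's identity gives $\nabla_{L_{u,x}}H(\nabla u)=\langle\nabla H(\nu),\tau\rangle u_{\tau\tau}+H(\nu)u_{\nu\tau}=H(\nu)\,\nabla_{L_{u,x}}|\nabla u|$. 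Hence you recover $\nabla\nu=0$ on $\{\nabla u\neq0\}$, which is what your final step needs.

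A second, technical gap is the passage through $Z_u$. When you let $\varepsilon\to0$ in the truncation, the vanishing of the weight $|\nabla u|^2(\kappa+|\nabla u|)^{p-2}$ only compensates the derivative of the cutoff. You still need $B''(H(\nabla u))|D^2u|^2\in L^1_{loc}$ up to $Z_u$. In the paper's version you also need $B'(H(\nabla u))\nabla H(\nabla u)\in W^{1,2}_{loc}$, so that the linearized equation holds across $Z_u$. These weighted second-order estimates are imported regularity results. They matter precisely for $1<p<2$, $\kappa=0$, where $B''$ blows up at $0$, which is the case the paper singles out as new.
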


\begin{rem}
  In the case $1<p<2$, the result is new even in the Euclidean case, since, unlike in \cite{FSV}, we did not assume $\{\nabla u=0\}=\emptyset$.  
\end{rem}

The proof that we present is based on a Poincaré type inequality and it is a variation given in \cite {FSV,SZ1,SZ2}. In this regard, we will extend a formula by Stemberg and Zumbrun which relates the second derivatives of a weak solution of the equation \eqref{equazione} with the principal curvatures of the corresponding level set, in the anisotropic setting.

The second symmetry result is the following

\begin{thm}\label{DegiorgiN=3}
    Let $N=3$. Let $u\in C^{1}(\R^3)\cap W^{1,\infty}(\R^3)$ be a weak solution of \eqref{equazione}.
    Suppose that 
    \begin{equation}\label{monotonia}
        \partial _{x_3}u >0.
    \end{equation}
    Then $u$ has one dimensional symmetry, namely there exist $u_0 :\R\rightarrow \R$ and $w \in S^{2}$ such that $u(x)=u_0(w\cdot x)$ for any $x\in \R^3$.
\end{thm}

We remark that we will prove Theorem \ref{DegiorgiN=3} under weaker assumptions, namely: $ \partial _{x_3}u \ge 0$, $\{\nabla u=0\}=\emptyset$. Furthermore, the condition \eqref{monotonia} implies the stability condition in \eqref{stabilità} (see Lemma \ref{monotoniaimplicastabilità} in Section \ref{dimostrazioneteorema}).

In the Euclidean framework and in the case of $B(t)=t^2/2$ and $f(t)=t-t^3$, Theorems \ref{DegiorgiN=2} and \ref{DegiorgiN=3} reduce to a problem proposed by De Giorgi, \cite{DEG}.
The first result was obtained for $N=2$ in \cite{BCN,GG}, and, subsequently, the case $N = 3$ was proved in \cite{AAC,AC}. Moreover, assuming an additional limit requirement on the solutions the De Giorgi’s conjecture is true for $4\le N\le 8$, (see \cite{Savin}). The conjecture fails for $N\ge 9,$ \cite{PKW}. 
In the case of $p-$Laplace operator and non-uniformly elliptic operators, one dimensional symmetry for solutions has also been dealt with in \cite{CGS,F1,FV,FSV,FSV2,VSS}. We also mention the recent result in \cite{figalli}. 

Concluding the introduction, let us mention that anisotropic operators arise in Finsler geometry, see \cite{BePa, CianSal, LePa, Sh1, Sh2}, as modelization e.g.~of material science \cite{CaHo, Gu}, relativity \cite{As}, biology \cite{AnInMa},
image processing \cite{EsOs, PeMa}.

\section{Notation and preliminary results} \label{Sec1}


The aim of this section is to introduce some notation and to recall technical results  about anisotropic elliptic operator that will be involved in the proof of the main results. For $a, b \in \R^N$ we denote by $a \otimes b$ the matrix whose entries are $(a \otimes b)_{ij}=a_ib_j$. We remark that for any $v,w \in \R^N$ it holds that:
$$\langle a \otimes b \ v, w \rangle = \langle b, v \rangle \langle a , w \rangle.$$

Let $H$ be a function belonging to $C^{3,\beta}_{loc}(\R^N \setminus \{0\})$.
$H$ is said a ``\emph{Finsler norm}'' if it satisfies the following assumptions:
\begin{itemize}
	\item[$(h_H)$]
\begin{enumerate}
    \item[(i)] $H(\xi)>0 \quad \forall \xi \in \R^N \setminus \{0\}$;

    \item[(ii)] $H(s \xi) = |s| H(\xi) \quad \forall \xi \in \R^N \setminus \{0\}, \, \forall s \in \R$;

    \item[(iii)] $H$ is \emph{uniformly elliptic}, that means the set $\mathcal{B}_1^H:=\{\xi \in \R^N  :  H(\xi) < 1\}$ is \emph{uniformly convex}
    \begin{equation}\label{uniformeellitticitadiH}
        \exists \Lambda > 0: \quad \langle \D^2H(\xi)v, v \rangle \geq \Lambda |v|^2 \quad \forall \xi \in \partial \mathcal{B}_1^H, \; \forall v \in \nabla H(\xi)^\bot.
    \end{equation}
\end{enumerate}
\end{itemize}

A set is said uniformly convex if the principal curvatures of its boundary are all strictly positive.

Since $H$ is a norm in $\R^N$, we immediately get that there exists $c_1,c_2>0$ such that:
\begin{equation}\label{H equiv euclidea}
c_1|\xi|\leq H(\xi)\leq c_2|\xi|,\,\quad\forall\, \xi\in\R^N.
\end{equation}

We recall that, since $H$ is a differentiable and $1$-homogeneous function, it holds the Euler's characterization result, i.e.
\begin{equation}\label{eulero}
\la\n H(\xi),\xi\ra=H(\xi) \qquad\forall\,\xi\in\R^N\setminus\{0\},
\end{equation}
and
\begin{equation}\label{grad 0 omog}
\nabla H(t\xi)=\hbox{sign}(t)\nabla H(\xi) \qquad\forall\,\xi\in\R^N\setminus\{0\}, \forall t\in\R\setminus \{0\}.
\end{equation}

Moreover
\begin{equation}\label{eq:PropFinsler1}
D^2H(t\xi)=\frac{1}{|t|}D^2H(\xi)\qquad \forall\,\xi\in\R^N\setminus\{0\}, \forall t\in \R\setminus\{0\}
\end{equation}
and
\begin{equation}\label{eq:PropFinsler2}
    D^2H(\xi)\xi=0 \qquad \forall\,\xi\in\R^N\setminus\{0\}.
\end{equation}
For more details on Finsler geometry see for instance \cite{BaChSh, BePa, JaSa}.

We refer to \cite{CoFaVa2, CoFaVa1, FeKa} for a discussion on the geometry of the Riemannian case and for interesting related results.

Recalling \eqref{matrice}, we conclude this section with a result that will be useful to us in the sequel (see \cite[Theorem $1.5$]{CoFaVa1}).

\begin{lem}[\cite{CoFaVa1}]\label{matricedefinitapositiva}
   For any $\xi \in \R^N\setminus\{0\}$ the following matrix
   \begin{equation}
       A(\xi)=B''(H(\xi))\nabla H(\xi)\otimes \nabla H(\xi)+B'(H(\xi))D^2H(\xi)
   \end{equation}
   is positive definite. In particular, there exist a constant $C$ such that 
   \begin{equation}
       A_{ij}(\xi)\eta_i\eta_j\ge C(\kappa+|\xi|)^{p-2}|\eta|^2,\quad \forall\xi\in \R^N\setminus\{0\},\quad \forall\eta\in \R^N.
   \end{equation}
  \end{lem}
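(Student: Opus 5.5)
The plan is to split an arbitrary $\eta\in\R^N$ into its component along $\nabla H(\xi)$ and a complementary component, and to use two facts: $B''(t)>0$, and the uniform convexity \eqref{uniformeellitticitadiH} extended by homogeneity. Fix $\xi\neq 0$ and set $\hat\xi=\xi/H(\xi)\in\partial\mathcal B_1^H$. By \eqref{eq:PropFinsler1} and \eqref{grad 0 omog} we have $D^2H(\xi)=H(\xi)^{-1}D^2H(\hat\xi)$ and $\nabla H(\xi)=\nabla H(\hat\xi)$. Hence \eqref{uniformeellitticitadiH} gives
\[
\langle D^2H(\xi)v,v\rangle\ge \frac{\Lambda}{H(\xi)}|v|^2 \quad\text{for all } v\perp\nabla H(\xi).
\]
By \eqref{eq:PropFinsler2} we also have $D^2H(\xi)\xi=0$, so $D^2H(\xi)$ is positive semidefinite with kernel spanned by $\xi$.

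Next decompose $\eta=s\,\xi+v$ with $v\perp\nabla H(\xi)$, where $s=\langle\nabla H(\xi),\eta\rangle/H(\xi)$; this works because \eqref{eulero} gives $\langle\nabla H(\xi),\xi\rangle=H(\xi)\neq0$. Using $D^2H(\xi)\xi=0$ and $\langle \nabla H(\xi),v\rangle=0$, the cross terms vanish and we get
\[
\langle A(\xi)\eta,\eta\rangle=B''(H)\,s^2H^2+B'(H)\langle D^2H(\xi)v,v\rangle \ge B''(H)\,s^2H^2+\Lambda\frac{B'(H)}{H}|v|^2 .
\]
Both coefficients are positive by (h$_B$)-(ii). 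If this vanishes then $s=0$ and $v=0$, so $\eta=0$, which proves positive definiteness.

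For the quantitative bound, \eqref{hp:boundAboveBelow} gives $B''(H)\ge\gamma(p-1)(\kappa+H)^{p-2}$ and $B'(H)/H\ge\gamma(\kappa+H)^{p-2}$. By \eqref{H equiv euclidea}, $(\kappa+H(\xi))^{p-2}\ge c\,(\kappa+|\xi|)^{p-2}$ for both signs of $p-2$, with $c$ depending only on $c_1,c_2,p$. This yields
\[
\langle A\eta,\eta\rangle\ge c'(\kappa+|\xi|)^{p-2}\big(s^2H^2+|v|^2\big).
\]
It remains to check that $s^2H^2+|v|^2\ge c''|\eta|^2$ uniformly in $\xi$. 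Since $|\eta|\le |s||\xi|+|v|\le c_1^{-1}|sH|+|v|$, the inequality $|\eta|^2\le 2(c_1^{-2}s^2H^2+|v|^2)$ follows, and this closes the argument.

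The main obstacle is the homogeneity bookkeeping, namely making sure that the factor $1/H$ coming from $D^2H$ combines with $B'(H)$ into exactly $\lambda_2=B'(H)/H$, and that the comparison $(\kappa+H)^{p-2}\sim(\kappa+|\xi|)^{p-2}$ holds uniformly in the case $\kappa=0$ with $p<2$. Both follow from $c_1|\xi|\le H\le c_2|\xi|$.
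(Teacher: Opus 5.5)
Your proof is correct and complete. The paper itself does not prove this lemma; it quotes it from \cite[Theorem 1.5]{CoFaVa1}. So your argument is a self-contained substitute for that citation, not a variant of something in the text.

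The oblique splitting $\eta=s\xi+v$ with $v\perp\nabla H(\xi)$ is the natural choice, for four reasons:
\begin{itemize}
\item $\xi$ is annihilated by $D^2H(\xi)$, by \eqref{eq:PropFinsler2};
\item the rank-one term only sees $\langle\nabla H(\xi),\eta\rangle=sH(\xi)$;
\item the transversal part $v$ is controlled by \eqref{uniformeellitticitadiH} rescaled through \eqref{eq:PropFinsler1};
\item the lack of orthogonality is absorbed by $c_1|\xi|\le H(\xi)$.
\end{itemize}
Equivalently, your estimate amounts to $A(\xi)\ge\min\{\lambda_1(H(\xi)),\lambda_2(H(\xi))\}\,D^2(H^2/2)(\xi)$, together with the uniform convexity of $H^2/2$:
\[
\langle D^2(H^2/2)(\xi)\eta,\eta\rangle\ge s^2H(\xi)^2+\Lambda|v|^2\ge c_0|\eta|^2 .
\]

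What your route buys is that it uses only the hypotheses exactly as formulated in this paper: the Hessian form of (h$_H$)-(iii), \eqref{H equiv euclidea}, and (h$_B$)-(ii),(iii). It also produces an explicit constant,
\[
C=\frac{\gamma\min\{p-1,\Lambda\}\,c}{2\max\{1,c_1^{-2}\}},
\]
with $c=\min\{1,c_1\}^{p-2}$ for $p\ge 2$ and $c=\max\{1,c_2\}^{p-2}$ for $p<2$. The citation buys brevity, at the price of leaving the reader to check that the convexity assumption in \cite{CoFaVa1} matches (h$_H$)-(iii) as stated here.
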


\section{Proof of Theorem \ref{DegiorgiN=2}}
 The aim of this section is to prove Theorem \eqref{DegiorgiN=2}.


We start with 

\begin{lem}\label{Lemma1}
    Let $\Omega$ be a domain in $\R^N$. Let $u\in C^{1}(\R^N)$ be a weak solution of \eqref{equazione}.  Then, $u_i:=\frac{\partial u}{\partial x_i}$ is a solution of 
    \begin{equation}\label{u_irisolvelinearizzato}
        \begin{split}
        &\int_{\Omega\setminus Z_u} B''(H(\nabla u))\langle \nabla H(\nabla u),\nabla u_i\rangle\langle \nabla H(\nabla u),\nabla \varphi\rangle  \\ &+ B'(H(\nabla u)) \langle D^2H(\nabla u)\nabla u_i,\nabla \varphi\rangle =\int_{\Omega} f'(u)u_i\varphi\quad \forall \varphi\in C^1_c(\Omega),
    \end{split}
    \end{equation}
    for any $i=1,...,N.$

    Moreover we have 
    \begin{equation}\label{equazione22}
\begin{split}
&\int_{\Omega\cap \{\nabla u\neq 0\}} \sum_{i=1}^NB''(H(\nabla u))\langle\nabla H(\nabla u),\nabla u_i \rangle \langle \nabla H(\nabla u),\nabla(u_i\varphi^2)\rangle \\&+\sum_{i=1}^NB'(H(\nabla u)) \langle D^2H(\nabla u) \nabla u_i,\nabla (u_i\varphi^2)\rangle\,dx  =\int_{\Omega\cap \{\nabla u\neq 0\}} f'(u)|\nabla u|^2 \varphi ^2\,dx \quad \forall\varphi\in C^1_c(\Omega).
\end{split}
\end{equation}

\end{lem}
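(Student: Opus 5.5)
The plan is to differentiate the weak formulation \eqref{formulazionedebole} in the direction $x_i$, via a difference-quotient argument, and then pass to the limit using the regularity of $u$. The starting point is the regularity recalled in the introduction: $u\in C^{1,\alpha}_{loc}(\R^N)\cap C^2(\R^N\setminus Z_u)$.

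The key additional ingredient is the known second-order estimate for degenerate anisotropic equations from \cite{CoFaVa1,CoFaVa2}:
\[
B'(H(\nabla u))\nabla H(\nabla u)\in W^{1,2}_{loc}(\R^N),
\]
together with a weighted $W^{2,2}$-type bound, for instance $(\kappa+|\nabla u|)^{p-2}|D^2u|^2\in L^1_{loc}$ in the relevant range. The point of this bound is that the integrand on the left of \eqref{u_irisolvelinearizzato} is $A(\nabla u)\nabla u_i$, which on $\{\nabla u\ne0\}$ coincides with $\partial_i\big(B'(H(\nabla u))\nabla H(\nabla u)\big)$ by the chain rule, since $B\circ H$ is $C^2$ away from $0$.

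The derivation of the first identity proceeds as follows.
\begin{enumerate}
\item Test \eqref{formulazionedebole} with $\partial_i\varphi$, for $\varphi\in C^2_c(\Omega)$ first and then by density.
\item Since the vector field $V:=B'(H(\nabla u))\nabla H(\nabla u)$ is in $W^{1,2}_{loc}$, integrate by parts: $\int\langle V,\nabla\partial_i\varphi\rangle=-\int\langle\partial_iV,\nabla\varphi\rangle$. The right-hand side becomes $\int f(u)\partial_i\varphi=-\int f'(u)u_i\varphi$, because $f(u)\in C^1$ where $u$ takes values in the domain of $f$.
\item Identify $\partial_iV$. On $\{\nabla u\ne0\}$ it equals $A(\nabla u)\nabla u_i$. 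On $Z_u$ we have $\nabla u_i=0$ a.e. (Stampacchia, since $u_i\in W^{1,2}_{loc}$ there by the weighted estimate). One also has $\partial_iV=0$ a.e. on $Z_u$, because $V=0$ there and $V\in W^{1,1}_{loc}$. This yields \eqref{u_irisolvelinearizzato} with the integral restricted to $\Omega\setminus Z_u$.
\end{enumerate}

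For the second identity \eqref{equazione22}, plug $u_i\varphi^2$ into \eqref{u_irisolvelinearizzato} as a test function and sum over $i$. This is legitimate by density, provided the left integrand is integrable, i.e. provided
\[
|A(\nabla u)\nabla u_i|\,|\nabla(u_i\varphi^2)|\lesssim(\kappa+|\nabla u|)^{p-2}\big(|D^2u|^2+|D^2u|\big)
\]
is locally integrable. On $\{\nabla u\ne0\}$ we use the bound $|A(\xi)|\le C(\kappa+|\xi|)^{p-2}$, which follows from $(h_B)$, \eqref{eq:PropFinsler1} and \eqref{H equiv euclidea}. The right-hand side gives $\sum_i f'(u)u_i^2\varphi^2=f'(u)|\nabla u|^2\varphi^2$, which vanishes on $Z_u$, so its integral may be restricted to $\{\nabla u\ne0\}$.

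The main obstacle is the integrability near $Z_u$ when $p<2$, where the weight $(\kappa+|\nabla u|)^{p-2}$ blows up for $\kappa=0$. I would handle it with the sharper estimate $|\nabla u|^{p-2}|D^2u|^2\in L^1_{loc}$, which holds for stable-type and degenerate equations in the spirit of Damascelli–Sciunzi, adapted to the anisotropic setting in \cite{CoFaVa1}. If only $V\in W^{1,2}_{loc}$ is available, I would instead test \eqref{u_irisolvelinearizzato} with $u_iG_\varepsilon(|\nabla u|)\varphi^2$, where $G_\varepsilon$ is a cutoff vanishing near $Z_u$, and let $\varepsilon\to0$ by dominated and monotone convergence. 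The correction term involving $G_\varepsilon'$ should vanish in the limit, since $\varepsilon G_\varepsilon'(|\nabla u|)$ is bounded and supported where $\varepsilon<|\nabla u|<2\varepsilon$.
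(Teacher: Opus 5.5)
Your proposal is correct and follows essentially the paper's route: testing with $\partial_i\varphi$ and using $B'(H(\nabla u))\nabla H(\nabla u)\in W^{1,2}_{loc}$ with Stampacchia's theorem gives the first identity. The second comes from testing with a truncation of $u_i\varphi^2$ supported away from $Z_u$ and letting $\varepsilon\to0$, using exactly the weighted bound $(\kappa+|\nabla u|)^{p-2}|D^2u|^2\in L^1_{loc}$ that the paper cites; the paper truncates as $G_\varepsilon(u_i)\varphi^2$, which produces no $\nabla|\nabla u|$ correction term. One correction to your commentary: your diagnosis of the hard case is reversed, because for $p\le 2$ that weighted bound already follows from $V\in W^{1,2}_{loc}$ (since $|\nabla V|\ge c(\kappa+|\nabla u|)^{p-2}|D^2u|$ on $\{\nabla u\neq0\}$ and $(\kappa+|\nabla u|)^{2-p}$ is locally bounded), so the case really needing extra input is $p>2$ with $\kappa=0$. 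There your fallback closes once you note that, for $G_\varepsilon$ nondecreasing, the correction term summed over $i$ equals $\int\varphi^2|\nabla u|\,G_\varepsilon'(|\nabla u|)\langle A(\nabla u)\nabla|\nabla u|,\nabla|\nabla u|\rangle\ge0$. This gives an $\varepsilon$-uniform bound on $\sum_i\int G_\varepsilon(|\nabla u|)\varphi^2\langle A(\nabla u)\nabla u_i,\nabla u_i\rangle$, hence the weighted estimate itself.
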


\begin{proof}
    We recall that $B'(H(\nabla u))\nabla H(\nabla u)\in W^{1,2}_{loc}(\Omega)$ (see \cite{Alberto2,CaRiSc,MS}). Using Stampacchia's theorem (see \cite[Theorem $6.19$]{S}) we deduce that the distributional derivative of $B'(H(\nabla u))\nabla H(\nabla u)$ is given by 
    \begin{equation}\label{distribuzionalederivata}
    \partial_i \left(B'(H(\nabla u))\nabla H(\nabla u)\right)=   \left\{\begin{array}{llll}
				 B''(H(\nabla u))\langle \nabla H(\nabla u),\nabla u_i\rangle \nabla H(\nabla u) \\ \\ + B'(H(\nabla u))  D^2H(\nabla u)\nabla u_i\qquad &\text{if }x\in \{\nabla u\neq0\}.\\ \\
				0 \qquad &\text{if } x\in \{\nabla u=0\} ,\\  
				
			\end{array}\right.
   \end{equation}
   for any $i=1,...,N.$
 Taking $\psi:=\partial_i \varphi$ in \eqref{formulazionedebole}, by \eqref{distribuzionalederivata} and integrating by parts, we get the \eqref{u_irisolvelinearizzato}.

Now we prove \eqref{equazione22}. Now, fix  $\epsilon>0$ and, for $\varphi\in C^1_c(\Omega)$, 
let us consider
\begin{equation}\label{test1}
\psi:=G_\varepsilon(u_i)\varphi^2
\end{equation}
 with $G_{\epsilon}\in C^{\infty}(\R)$ such that $G_{\epsilon}(t)=t$ if $|t|\ge 2\epsilon$, $G_{\epsilon}(t)=0$ if $|t|\le \epsilon$ and $G'_{\epsilon}(t)\le 3$ for any $t\in \R.$ Using $G_\varepsilon(u_i)\varphi^2\in C^1_c(\Omega)$ in  \eqref{u_irisolvelinearizzato} we have

\begin{equation}\label{zicu}
\begin{split}
    0= &\int_{\Omega}  B''(H(\nabla u))\langle \nabla H(\nabla u),\nabla u_i\rangle\langle \nabla H(\nabla u),\nabla (G_{\epsilon}(u_i)\varphi^2)\rangle \\&+  B'(H(\nabla u)) \langle D^2H(\nabla u)\nabla u_i,\nabla (G_{\epsilon}(u_i)\varphi^2)\rangle - f'(u)u_iG_{\epsilon}(u_i)\varphi^2 \,dx \\  =&\int_{\Omega} B''(H(\nabla u))\langle \nabla H(\nabla u),\nabla u_i\rangle\langle \nabla H(\nabla u),\nabla u_i\rangle G'_\varepsilon(u_i)\varphi^2 \\ & +B''(H(\nabla u))\langle \nabla H(\nabla u),\nabla u_i\rangle\langle \nabla H(\nabla u),\nabla (\varphi ^2)\rangle G_{\epsilon}(u_i)\\ &+B'(H(\nabla u)) \langle D^2H(\nabla u)\nabla u_i,\nabla u_i\rangle G'_\varepsilon(u_i)\varphi^2\\ &+B'(H(\nabla u)) \langle D^2H(\nabla u)\nabla u_i,\nabla (\varphi^2)\rangle G_{\epsilon}(u_i)- f'(u)u_iG_{\epsilon}(u_i)\varphi ^2 \,dx.
\end{split}
\end{equation}

Since that $\nabla H$ is a continuous and $0-$homogeneous function, we infer there exist $M>0$ such that
\begin{equation}\label{grad00}
    |\nabla H(\xi)|\le M \qquad \forall\xi\in \R^N\setminus \{0\}.
\end{equation}
By \eqref{grad00}, and since $B'(H(\nabla u))\nabla H(\nabla u)\in W^{1,2}_{loc}(\Omega)$, we note that

\begin{equation}\label{fact11}
\begin{split}
&B''(H(\nabla u))\langle \nabla H(\nabla u),\nabla u_i\rangle\langle \nabla H(\nabla u),\nabla u_i\rangle G'_\varepsilon(u_i)\varphi^2 \\ & \leq 3M^2 B''(H(\nabla u))|\nabla u_i|^2\varphi ^2\in L^1(\Omega).
\end{split}
\end{equation}
The fact that the right-hand side of \eqref{fact11} belongs to $L^1(\Omega)$ follows from \cite[Proposition 3.3]{CaRiSc} or \cite[Theorem 1.3]{BMV} (see also \cite{Alberto}). 

Recalling that $D^2H$ is a continuous and $-1-$homogeneous function, there exist a positive constant $M'$ such that $D^2H(\xi)\le \frac{M'}{H(\xi)}$ for all $\xi\in \R^N\setminus \{0\}.$ Similarly, we get 
\begin{equation}\label{fact2}
\begin{split}
    & B'(H(\nabla u)) \langle D^2H(\nabla u)\nabla u_i,\nabla u_i\rangle G'_\epsilon(u_i)\varphi^2 \\
    &\le 3M'\frac{B'(H(\nabla u))}{H(\nabla u)}|\nabla u_i|^2\varphi ^2 \in L^1(\Omega). 
    \end{split}
\end{equation}
As noted above, the right-hand side of \eqref{fact2} belongs to $L^1(\Omega)$.
Consequently, by \eqref{fact11} and \eqref{fact2}, passing $\epsilon \rightarrow 0$ in \eqref{zicu}, using dominated convergence theorem and summing over $i$ we get
\begin{equation}\label{equazione2}
\begin{split}
&\int_{\Omega\cap \{\nabla u\neq 0\}} \sum_{i=1}^NB''(H(\nabla u))\langle\nabla H(\nabla u),\nabla u_i \rangle \langle \nabla H(\nabla u),\nabla(u_i\varphi^2)\rangle \\&+\sum_{i=1}^NB'(H(\nabla u)) \langle D^2H(\nabla u) \nabla u_i,\nabla (u_i\varphi^2)\rangle\,dx  =\int_{\Omega\cap \{\nabla u\neq 0\}} f'(u)|\nabla u|^2 \varphi ^2\,dx.
\end{split}
\end{equation}

\end{proof}

To simplify future computations, we now state a key result that will be useful later.

\begin{lem}\label{Lemma2}
    Let $\Omega$ be a domain in $\R^N$. Let $u\in C^{1}(\R^N)$ be a stable weak solution of \eqref{equazione}.  Then, we have 
    
\begin{equation}\label{equazione144}
    \begin{split}
     &\int_{\Omega\cap \{\nabla u\neq 0\}} B''(H(\nabla u)) \langle \nabla H(\nabla u), \nabla |\nabla u|\rangle^2\varphi^2+B''(H(\nabla u)) \langle \nabla H(\nabla u), \nabla \varphi\rangle^2|\nabla u|^2 \\&+B''(H(\nabla u)) \langle \nabla H(\nabla u), \nabla |\nabla u|\rangle\langle \nabla H(\nabla u), \nabla \varphi\rangle2|\nabla u|\varphi\\&+B'(H(\nabla u))\langle D^2H(\nabla u)\nabla |\nabla u|,\nabla |\nabla u| \rangle\varphi ^2+B' (H(\nabla u))\langle D^2H(\nabla u)\nabla \varphi,\nabla \varphi \rangle |\nabla u|^2 \\ &+ B'(H(\nabla u)) \langle D^2H(\nabla u) \nabla |\nabla u|,\nabla \varphi\rangle |\nabla u|\varphi+B'(H(\nabla u)) \langle D^2H(\nabla u) \nabla \varphi,\nabla |\nabla u|\rangle |\nabla u|\varphi \\ &-f'(u)|\nabla u|^2\varphi ^2\,dx\ge 0\qquad \forall\varphi\in C^1_c(\Omega).
    \end{split}
\end{equation}
    
\end{lem}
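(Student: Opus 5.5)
The inequality \eqref{equazione144} is exactly the stability inequality \eqref{stabilità} written for the test function $\psi:=|\nabla u|\varphi$, with the integrand expanded on $\{\nabla u\neq 0\}$ via $\nabla\psi=\varphi\nabla|\nabla u|+|\nabla u|\nabla\varphi$. Indeed, for such $\psi$ we have
\[
\langle \nabla H(\nabla u),\nabla\psi\rangle^2=\langle \nabla H(\nabla u),\nabla|\nabla u|\rangle^2\varphi^2+2|\nabla u|\varphi\,\langle \nabla H(\nabla u),\nabla|\nabla u|\rangle\langle \nabla H(\nabla u),\nabla\varphi\rangle+|\nabla u|^2\langle \nabla H(\nabla u),\nabla\varphi\rangle^2 .
\]
The $D^2H$ term expands similarly into four pieces, with the two mixed pieces written separately, exactly as in the statement. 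So the plan is to justify plugging $|\nabla u|\varphi$ into Definition \ref{stabilità2}, where only $C^1_c$ test functions are allowed and $|\nabla u|$ is merely $C^{0,\alpha}$ near $Z_u$. The contribution of $Z_u$ to the right-hand side is harmless, since there $|\nabla u|=0$ and the integrand of \eqref{equazione144} is absent.

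\textbf{Step 1 (test functions away from the critical set).} Fix $\epsilon>0$ and take $\psi_\epsilon:=G_\epsilon(|\nabla u|)\varphi$, with $G_\epsilon$ the cutoff from the proof of Lemma \ref{Lemma1}. On the support of $\psi_\epsilon$ we have $|\nabla u|\ge\epsilon$, so $u\in C^2$ there and $\psi_\epsilon\in C^1_c(\Omega\setminus Z_u)$. (If needed, one first approximates by mollification, using $u\in C^2(\R^N\setminus Z_u)$.) This is the class of test functions for which the bilinear form is well defined, by the Remark after Definition \ref{stabilità2}. Hence
\[
L_u(\psi_\epsilon,\psi_\epsilon)\ge0,
\]
and expanding gives the analogue of \eqref{equazione144} with $|\nabla u|$ replaced by $G_\epsilon(|\nabla u|)$ and $\nabla|\nabla u|$ replaced by $G'_\epsilon(|\nabla u|)\nabla|\nabla u|$.

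\textbf{Step 2 (passage to the limit $\epsilon\to0$).} Pointwise on $\{\nabla u\neq0\}$ we have
\[
G_\epsilon(|\nabla u|)\to|\nabla u|, \qquad G'_\epsilon(|\nabla u|)\to1 .
\]
For domination, note that $|\nabla|\nabla u||\le|D^2u|$ a.e., $|G_\epsilon(t)|\le|t|$ and $0\le G'_\epsilon\le3$. Together with \eqref{grad00} and the bound $|D^2H(\xi)|\le M'/H(\xi)$, every term is controlled by a multiple of
\[
\Bigl(B''(H(\nabla u))+\tfrac{B'(H(\nabla u))}{H(\nabla u)}\Bigr)\bigl(|D^2u|^2\varphi^2+|\nabla u|^2|\nabla\varphi|^2\bigr)+|f'(u)||\nabla u|^2\varphi^2 .
\]
The first part lies in $L^1_{loc}$ by the same integrability results invoked in \eqref{fact11}--\eqref{fact2} (\cite{CaRiSc,BMV}); the mixed terms are handled by Cauchy--Schwarz. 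The $f'(u)$ term is bounded because $u$ is continuous and $\varphi$ has compact support, on the range where $f'$ is continuous. Dominated convergence then yields \eqref{equazione144}, with the integral restricted to $\Omega\cap\{\nabla u\ne0\}$ because $G_\epsilon(|\nabla u|)=0$ near $Z_u$.

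\textbf{Main obstacle.} The delicate point is the integrability near $Z_u$, i.e.\ that $B''(H(\nabla u))|D^2u|^2$ and $\frac{B'(H(\nabla u))}{H(\nabla u)}|D^2u|^2$ are locally integrable on $\{\nabla u\neq0\}$. For $1<p<2$ and $\kappa=0$ the weights blow up as $\nabla u\to0$, so this is essential. I would rely on the second-order regularity results already used in Lemma \ref{Lemma1}, which give exactly these weighted $L^1_{loc}$ bounds, rather than reprove them.
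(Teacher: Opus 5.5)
Your proposal is correct and is essentially the paper's proof. The paper also tests the stability inequality with $\psi=G_\epsilon(|\nabla u|)\varphi\in C^1_c(\Omega\setminus Z_u)$, expands $\nabla\psi=G_\epsilon'(|\nabla u|)\varphi\nabla|\nabla u|+G_\epsilon(|\nabla u|)\nabla\varphi$, and lets $\epsilon\to0$ by dominated convergence, using the same weighted Hessian integrability results invoked in Lemma \ref{Lemma1}. Your explicit dominating function (via $|\nabla|\nabla u||\le|D^2u|$ and the bounds on $\nabla H$, $D^2H$) just spells out what the paper leaves implicit, and the mollification step is unnecessary, since $G_\epsilon(|\nabla u|)\varphi$ is already $C^1$ with support in $\{|\nabla u|\ge\epsilon\}$.
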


\begin{proof}
    For $\epsilon>0$, we consider \begin{equation}\label{test2}
        \psi:= G_{\epsilon}(|\nabla u|)\varphi
    \end{equation} 
    where $\varphi\in C^1_c(\Omega)$ and $G_{\epsilon}\in C^{\infty}(\R)$ such that $G_{\epsilon}(t)=t$ if $|t|\ge 2\epsilon$, $G_{\epsilon}(t)=0$ if $|t|\le \epsilon$ and $G'_{\epsilon}(t)\le 3$ for any $t\in \R.$ 

We observe that $G_{\epsilon}(|\nabla u|)\varphi\in C^1_c(\Omega\setminus Z_u)$  is a good test function in \eqref{stabilità}. Substituting \eqref{test2} in \eqref{stabilità} we get 

\begin{equation}\label{eq1}
\begin{split}
&\int_{\Omega}  B''(H(\nabla u))\langle\nabla H(\nabla u),\nabla \left(G_{\epsilon}(|\nabla u|)\varphi\right) \rangle^2 \\ &+ B'(H(\nabla u))\langle D^2H(\nabla u)\nabla \left(G_{\epsilon}(|\nabla u|)\varphi\right), \nabla \left(G_{\epsilon}(|\nabla u|)\varphi\right)\rangle -f'(u) G_{\epsilon}(|\nabla u|)^2\varphi ^2 \,dx\ge 0.
\end{split}
\end{equation}

Since $\nabla \psi= G'_{\epsilon}(|\nabla u|)\nabla |\nabla u|\varphi+G_{\epsilon}(|\nabla u|)\nabla \varphi$, \eqref{eq1} becomes

\begin{equation}\label{eq2}
    \begin{split}
     0\le &\int_\Omega B''(H(\nabla u)) \langle \nabla H(\nabla u), \nabla |\nabla u|\rangle^2G_{\epsilon}'(|\nabla u |)^2\varphi^2+B''(H(\nabla u)) \langle \nabla H(\nabla u), \nabla \varphi\rangle^2G_{\epsilon}(|\nabla u|)^2 \\&+B''(H(\nabla u)) \langle \nabla H(\nabla u), \nabla |\nabla u|\rangle\langle \nabla H(\nabla u), \nabla \varphi\rangle2G_{\epsilon}(|\nabla u|)G_{\epsilon}'(|\nabla u|)\varphi\\&+B' (H(\nabla u))\langle D^2H(\nabla u)\nabla |\nabla u|,\nabla |\nabla u| \rangle G_{\epsilon}'(|\nabla u|)^2\varphi ^2\\&+B' (H(\nabla u))\langle D^2H(\nabla u)\nabla \varphi,\nabla \varphi \rangle G_{\epsilon}(|\nabla u|)^2 \\ &+ B'(H(\nabla u)) \langle D^2H(\nabla u) \nabla |\nabla u|,\nabla \varphi\rangle G_{\epsilon}(|\nabla u|)G_{\epsilon}'(|\nabla u|)\varphi \\&+B'(H(\nabla u)) \langle D^2H(\nabla u) \nabla \varphi,\nabla |\nabla u|\rangle G_{\epsilon}(|\nabla u|)G_{\epsilon}'(|\nabla u|)\varphi -f'(u)G_{\epsilon}(|\nabla u|)^2\varphi^2\,dx.
    \end{split}
\end{equation}

We note that, for any $m>0$ and $\Psi\in L^1(\Omega)$, we have 

\begin{equation}\label{fact1}
\lim_{\epsilon \rightarrow 0}\int_\Omega G'_{\epsilon}(|\nabla u|)^m\Psi\,dx-\int_{\Omega \cap \{\nabla u\neq 0\}}\Psi\,dx \le \lim_{\epsilon \rightarrow 0}\int_{\Omega \cap \{\nabla u\neq 0\}} 3^m|\Psi| \chi _{\epsilon \le |\nabla u|\le 2\epsilon}\,dx=0.
\end{equation}

We now let $\epsilon \to 0$ in \eqref{eq2}, which is allowed thanks to the regularity results in \cite[Proposition 3.3]{CaRiSc} or \cite[Theorem 1.3]{BMV}. Using \eqref{fact1} and the dominated convergence theorem, proceeding as in the proof of the Lemma \ref{Lemma1} we conclude that

\begin{equation}\label{equazione1}
    \begin{split}
     0\le &\int_{\Omega\cap\{\nabla u\neq 0\}} B''(H(\nabla u)) \langle \nabla H(\nabla u), \nabla |\nabla u|\rangle^2\varphi^2+B''(H(\nabla u)) \langle \nabla H(\nabla u), \nabla \varphi\rangle^2|\nabla u|^2 \\&+B''(H(\nabla u)) \langle \nabla H(\nabla u), \nabla |\nabla u|\rangle\langle \nabla H(\nabla u), \nabla \varphi\rangle2|\nabla u|\varphi\\&+B'(H(\nabla u))\langle D^2H(\nabla u)\nabla |\nabla u|,\nabla |\nabla u| \rangle\varphi ^2+B' (H(\nabla u))\langle D^2H(\nabla u)\nabla \varphi,\nabla \varphi \rangle |\nabla u|^2 \\ &+ B'(H(\nabla u)) \langle D^2H(\nabla u) \nabla |\nabla u|,\nabla \varphi\rangle |\nabla u|\varphi+B'(H(\nabla u)) \langle D^2H(\nabla u) \nabla \varphi,\nabla |\nabla u|\rangle |\nabla u|\varphi \\ &-f'(u)|\nabla u|^2\varphi ^2\,dx.
    \end{split}
\end{equation}

\end{proof}

Now we recall the definition of the tangential gradient with respect to a regular level set. For $u\in C^1(\R^N)$, we define the level set of $u$ at $x$ as 
\begin{equation}
    L_{u,x}:=\{y\in \R^N \text{ such that } u(y)=u(x)\}.
\end{equation}

Given $f\in C^1(B_r(x))$, for $r>0$, the tangential gradient denotes the orthogonal projection of the gradient along this level set, explicitly 

\begin{equation}\label{tangenzialegradiente}
\nabla_{L_{u,x}}f(x):=\nabla f(x)-\langle\nabla f(x),\frac{\nabla u(x)}{|\nabla u(x)|}\rangle\frac{\nabla u(x)}{|\nabla u(x)|}.
\end{equation}

In the sequel we give an extension, in the anisotropic framework, of a formula obtained in \cite{SZ1,SZ2}. 

\begin{lem}\label{Zumbrun}
    Let $\Omega\subset\R^N$ be an open set. Then for any function $u\in C^1(\Omega)\cap C^2(\{\nabla u\neq 0\})$ and any $x\in \{\nabla u\neq 0\}$, we have 
    \begin{equation}\label{zumpo}
    \begin{split}
   \sum_{i=1}^N \langle D^2H(\nabla u)\nabla u_i,\nabla u_i\rangle-\langle D^2H(\nabla u)\nabla |\nabla u|,\nabla |\nabla u|\rangle \\  \ge\frac{C_H}{H(\nabla u)}|\nabla u|^2 \sum_{i=1}^{N-1}k_{i,u}^2,
    \end{split}
\end{equation}

where $k_{i,u}(x)$, for $i=1,...,N-1$, denote the principal curvature of the level set $L_{u,x}$ of $u$ at $x$.
\end{lem}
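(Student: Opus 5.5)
The plan is to work pointwise at a fixed $x\in\{\nabla u\neq 0\}$ and reduce \eqref{zumpo} to a linear-algebra inequality involving the matrix $M:=D^2H(\nabla u(x))$ and the Hessian $D^2u(x)$. I will rotate coordinates so that $\nu:=\nabla u(x)/|\nabla u(x)|=e_N$ and the tangent space $\nu^\perp=\mathrm{span}\{e_1,\dots,e_{N-1}\}$ diagonalizes the restriction of $D^2u(x)$ to $\nu^\perp$. This restriction is the second fundamental form of $L_{u,x}$ times $|\nabla u|$. Since $D^2u$ is symmetric, $\nabla u_j=D^2u\,e_j$ and $\nabla|\nabla u|=D^2u\,\nu=D^2u\,e_N$. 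Therefore
\[
\sum_{i=1}^N \langle M\nabla u_i,\nabla u_i\rangle-\langle M\nabla|\nabla u|,\nabla|\nabla u|\rangle=\sum_{j=1}^{N-1}\langle M D^2u\,e_j,D^2u\,e_j\rangle .
\]
In other words, the subtracted term exactly cancels the normal column, which is the heart of the Sternberg--Zumbrun identity.

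Next, I will use \eqref{eq:PropFinsler2}: $M\nabla u=0$. Since $M$ is symmetric, each term $\langle Mw,w\rangle$ depends only on the projection $P w$ of $w$ onto $\nu^\perp$. Thus
\[
\langle M D^2u\,e_j,D^2u\,e_j\rangle=\langle M P D^2u\,e_j,P D^2u\,e_j\rangle .
\]
The key step, and the one I expect to be the main obstacle, is coercivity of $M$ on $\nu^\perp$. The ellipticity assumption \eqref{uniformeellitticitadiH} is stated on $\nabla H(\xi)^\perp$, not on $\xi^\perp$, and the two hyperplanes differ in the anisotropic case. To handle this, take $v\in\nu^\perp$. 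By \eqref{eulero}, $\langle\nabla H(\xi),\xi\rangle=H(\xi)>0$, so I can write $v=w+t\xi$ with $w\in\nabla H(\xi)^\perp$, where $\xi=\nabla u(x)$. Then $\langle Mv,v\rangle=\langle Mw,w\rangle$, again by $M\xi=0$. Moreover, since $v\perp\xi$, we have $|v|\le|v-t\xi|=|w|$. Applying \eqref{uniformeellitticitadiH} at $\xi/H(\xi)\in\partial\mathcal B_1^H$ and using the homogeneity \eqref{eq:PropFinsler1} gives
\[
\langle Mv,v\rangle\ \ge\ \frac{\Lambda}{H(\xi)}|w|^2\ \ge\ \frac{\Lambda}{H(\xi)}|v|^2\qquad\forall v\in\nu^\perp .
\]

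Finally, I combine these bounds. We get
\[
\sum_{j=1}^{N-1}\langle M D^2u\,e_j,D^2u\,e_j\rangle\ \ge\ \frac{\Lambda}{H(\nabla u)}\sum_{i,j=1}^{N-1}u_{ij}^2 .
\]
By the classical computation, valid for a $C^2$ level set with nonvanishing gradient, the tangential block satisfies $u_{ij}=\pm|\nabla u|\,k_{i,u}\delta_{ij}$ for $i,j\le N-1$ in the chosen principal frame. Hence
\[
\sum_{i,j=1}^{N-1}u_{ij}^2=|\nabla u|^2\sum_{i=1}^{N-1}k_{i,u}^2 ,
\]
which gives \eqref{zumpo} with $C_H=\Lambda$.
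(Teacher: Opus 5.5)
Your proposal is correct and follows essentially the paper's route. Both arguments work pointwise in an orthonormal frame with $\tau_N=\nabla u/|\nabla u|$, cancel the normal term because $\nabla|\nabla u|=\nabla u_N$ at the point, and use $D^2H(\xi)\xi=0$ to reduce to tangential components. Both then identify the tangential Hessian block with $|\nabla u|$ times the second fundamental form and conclude by coercivity from \eqref{uniformeellitticitadiH} and $-1$-homogeneity.

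Your decomposition $v=w+t\xi$ with $w\in\nabla H(\xi)^\perp$ also makes explicit a step the paper only asserts: that ellipticity on $\nabla H(\xi)^\perp$ gives coercivity on $\xi^\perp$. This yields the explicit constant $C_H=\Lambda$.
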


\begin{proof}
We suppose $\nabla u(x_0) \neq 0$, for some $x_0\in \Omega.$ For all $x$ in a neighbourhood of $x_0$, we set $\tau_n(x):=\frac{\nabla u(x)}{|\nabla u(x)|}$ and we denote with $\{\tau_i\}_{i=1,...,N-1}$ an orthonormal basis for the tangent plane of the level set $\{x : u(x) = u(x_0)\}$ at $x_0$. 
    We note that 
    \begin{equation}\label{identita}
        \nabla u_i=\sum_{j=1}^N \langle \nabla u_i, \tau_j\rangle \tau_j, 
    \end{equation}
    for $i=1,...,N.$
By \eqref{identita}, we get 
\begin{equation}
\begin{split}
   &\sum_{i=1}^N \langle D^2H(\nabla u)\nabla u_i,\nabla u_i\rangle-\langle D^2H(\nabla u)\nabla |\nabla u|,\nabla |\nabla u|\rangle\\ &=\sum _{i,j,l=1}^N  \langle D^2H(\nabla u) \tau_j,\tau_l\rangle \langle\nabla u_i,\tau_j\rangle \langle\nabla u_i,\tau_l\rangle-\langle D^2H(\nabla u)\nabla |\nabla u|,\nabla |\nabla u|\rangle.
    \end{split}
\end{equation}

Since $D^2H(\xi)\xi=0$ for every $\xi\in \R^N\setminus \{0\},$ we have 
\begin{equation}
\begin{split}
   &\sum_{i=1}^N \langle D^2H(\nabla u)\nabla u_i,\nabla u_i\rangle-\langle D^2H(\nabla u)\nabla |\nabla u|,\nabla |\nabla u|\rangle\\ &=\sum _{i=1}^N\sum_{j,l=1}^{N-1}  \langle D^2H(\nabla u) \tau_j,\tau_l\rangle \langle\nabla u_i,\tau_j\rangle \langle\nabla u_i,\tau_l\rangle-\langle D^2H(\nabla u)\nabla |\nabla u|,\nabla |\nabla u|\rangle.
    \end{split}
\end{equation}

Since $\tau_n$ is orthogonal to $\tau _j$, for $j=1,...,N-1$, we deduce that 
\begin{equation}\label{identi}
    \langle \nabla u_i,\tau_j\rangle =\langle \partial_i\left(|\nabla u|\tau_n\right),\tau_j\rangle=|\nabla u|\langle (\tau_n)_{x_i},\tau_j\rangle,
\end{equation}

for $i=1,...,N$ and $j=1,...,N-1$.

From previous identity we get
\begin{equation}\label{equa}
\begin{split}
   &\sum_{i=1}^N \langle D^2H(\nabla u)\nabla u_i,\nabla u_i\rangle-\langle D^2H(\nabla u)\nabla |\nabla u|,\nabla |\nabla u|\rangle\\ &=\sum _{i=1}^{N-1}\sum_{j,l=1}^{N-1}  |\nabla u|^2\langle D^2H(\nabla u) \tau_j,\tau_l\rangle \langle (\tau_n)_{x_i},\tau_j\rangle\langle (\tau_n)_{x_i},\tau_l\rangle\\&+\sum_{j,l=1}^{N-1}  \langle D^2H(\nabla u) \tau_j,\tau_l\rangle \langle\nabla u_N,\tau_j\rangle \langle\nabla u_N,\tau_l\rangle-\langle D^2H(\nabla u)\nabla |\nabla u|,\nabla |\nabla u|\rangle.
    \end{split}
\end{equation}

Without loss of generality, we now assume that $\tau_i(x_0)$ points in the direction of the coordinate $x_i$ for $i = 1,...,N$.

At the point $x=x_0$, we note that 
\begin{equation}\label{secs}
\begin{split}
    \nabla |\nabla u|(x_0)&=\left(\sum_{p=1}^N \langle\nabla |\nabla u|,\tau_p \rangle \tau_p\right)(x_0)\\&=\left(\sum_{p=1}^N\partial_p(|\nabla u|)\tau_p\right)(x_0)=\left(\sum_{p=1}^N \langle\nabla u_p,\tau_n \rangle \tau_p\right)(x_0).
    \end{split}
\end{equation}

By \eqref{secs}, since $D^2H$ is a symmetric matrix and $D^2H(\xi)\xi=0$, $\forall\xi \in \R^{N}\setminus \{0\}$, we have 

\begin{equation}\label{equa2}
    \begin{split}
        \langle D^2H(\nabla u)\nabla |\nabla u|,\nabla |\nabla u|\rangle(x_0) &= \left(\sum_{p,s=1}^{N-1} \langle D^2H(\nabla u) \tau_p,\tau_s \rangle\langle \nabla u_p,\tau_n\rangle\langle\nabla u_s,\tau_n\rangle\right)(x_0)\\ &=\left(\sum_{p,s=1}^{N-1}\langle D^2H(\nabla u) \tau_p,\tau_s\rangle \partial_N(u_p)\partial_N(u_s)\right)(x_0) \\ &=\left(\sum_{p,s=1}^{N-1} \langle D^2H(\nabla u) \tau_p,\tau_s\rangle \langle\nabla u_N,\tau_p\rangle \langle\nabla u_N,\tau_s\rangle\right)(x_0).
    \end{split}
\end{equation}

By \eqref{equa} and \eqref{equa2}, we have at $x_0$ 

\begin{equation}
    \begin{split}
       &\sum_{i=1}^N \langle D^2H(\nabla u)\nabla u_i,\nabla u_i\rangle-\langle D^2H(\nabla u)\nabla |\nabla u|,\nabla |\nabla u|\rangle\\&=\sum _{i=1}^{N-1}\sum_{j,l=1}^{N-1}  |\nabla u|^2\langle D^2H(\nabla u) \tau_j,\tau_l\rangle \langle (\tau_n)_{x_i},\tau_j\rangle\langle (\tau_n)_{x_i},\tau_l\rangle\\ &=\sum _{i=1}^{N-1}  |\nabla u|^2\langle D^2H(\nabla u) \sum_{j=1}^{N-1}B_{ij,u}\tau_j,\sum_{l=1}^{N-1}B_{il,u}\tau_l\rangle 
    \end{split}
\end{equation}
where $B=\left(B_{ij,u}\right)$ denotes the second fundamental form at $x_0$ associated with the level set $\{x : u(x) = u(x_0)\}$.

Since $H$ is uniformly elliptic, see \eqref{uniformeellitticitadiH} and $D^2H$ is $-1$-homogeneous function, there exist a constant $C_H>0$, depending on $H$, such that
\begin{equation}
\begin{split}
    &\sum _{i=1}^{N-1}  |\nabla u|^2\langle D^2H(\nabla u) \sum_{j=1}^{N-1}B_{ij,u}\tau_j,\sum_{l=1}^{N-1}B_{il,u}\tau_l\rangle \\ &\ge \frac{C_H}{H(\nabla u)} |\nabla u|^2\sum_{i=1}^{N-1}\left|\sum_{j=1}^{N-1}B_{ij,u}\tau_j\right|^2= \frac{C_H}{H(\nabla u)} |\nabla u|^2\sum_{i,j=1}^{N-1} B_{ij,u}^2 \\&=\frac{C_H}{H(\nabla u)}|\nabla u|^2 \sum_{i=1}^{N-1}k_{i,u}^2,
    \end{split}
\end{equation}

where $k_{i,u}(x)$, for $i=1,...,N-1$, denote the principal curvature of the level set $L_{u,x_0}$ of $u$ at $x_0$.
    
\end{proof}

We are now ready to prove a key result for the proof of the Theorem \ref{DegiorgiN=2}.

\begin{thm}\label{finirà}
    Let $\Omega\subset \R^N$ be open. Let $u\in C^{1}(\Omega)$ be a  weak stable solution of \eqref{equazione}.  For any $x\in \Omega\cap \{\nabla u\neq 0\}$ let $k_{i,u}(x)$, for $i=1,...,N-1$, denote the principal curvature of the level set $L_{u,x}$ of $u$ at $x$.
    Then there exist a positive constant $C_H$, depending on $H$, such that
    \begin{equation}\label{formulafondamentale}
       \begin{split}
       & \int_{\Omega\cap\{\nabla u\neq 0\}} \left(\lambda_2(H(\nabla u))  |\nabla u|^2 \sum_{i=1}^{N-1} k_{i,u}^2  +\lambda_1(H(\nabla u))\ \left|\nabla _{L_{u,x}}H(\nabla u)\right|^2\right)\varphi^2\,dx \\ &\le C_H\int_{\Omega} \langle A(\nabla u)\nabla \varphi,\nabla \varphi\rangle|\nabla u|^2 \,dx, 
    \end{split}
\end{equation}
    
    for any  $\varphi\in C^1_c(\Omega).$
\end{thm}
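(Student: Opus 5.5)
The proof combines the stability inequality of Lemma \ref{Lemma2} (tested with $|\nabla u|\varphi$) with the identity \eqref{equazione22} of Lemma \ref{Lemma1} (tested with $u_i\varphi^2$), which is the Sternberg--Zumbrun mechanism. The $f'(u)|\nabla u|^2\varphi^2$ terms cancel, and everything else becomes either a nonnegative ``defect'' term or a term containing $\nabla\varphi$.

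\textbf{Step 1: expand the linearized identity.} In \eqref{equazione22}, write $\nabla(u_i\varphi^2)=\varphi^2\nabla u_i+2u_i\varphi\nabla\varphi$ and sum over $i$. Using $\sum_i u_i\nabla u_i=|\nabla u|\nabla|\nabla u|$ on $\{\nabla u\neq0\}$, the mixed terms become
$$2B''\langle\nabla H,\nabla|\nabla u|\rangle\langle\nabla H,\nabla\varphi\rangle|\nabla u|\varphi+2B'\langle D^2H\,\nabla|\nabla u|,\nabla\varphi\rangle|\nabla u|\varphi .$$
These coincide with the mixed terms in \eqref{equazione144}, since $D^2H$ is symmetric.

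\textbf{Step 2: subtract and drop the mixed terms.} Subtract \eqref{equazione22} from \eqref{equazione144}. The $f'$ terms and the mixed terms cancel, leaving on $\Omega\cap\{\nabla u\neq0\}$
$$\int B''\Big(\sum_i\langle\nabla H,\nabla u_i\rangle^2-\langle\nabla H,\nabla|\nabla u|\rangle^2\Big)\varphi^2+B'\Big(\sum_i\langle D^2H\nabla u_i,\nabla u_i\rangle-\langle D^2H\nabla|\nabla u|,\nabla|\nabla u|\rangle\Big)\varphi^2\le\int\langle A(\nabla u)\nabla\varphi,\nabla\varphi\rangle|\nabla u|^2 .$$

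\textbf{Step 3: bound the two brackets below.} For the $B'$ bracket, apply Lemma \ref{Zumbrun} and use $B'(H)/H=\lambda_2(H)$. This gives $C_H\lambda_2(H(\nabla u))|\nabla u|^2\sum_i k_{i,u}^2$.

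For the $B''$ bracket, set $v=\nabla H(\nabla u)$, so that $\langle v,\nabla u_i\rangle=(D^2u\,v)_i$ and $\sum_i\langle v,\nabla u_i\rangle^2=|D^2u\,v|^2$. Also $\langle v,\nabla|\nabla u|\rangle=\langle D^2u\,v,\nu\rangle$ with $\nu=\nabla u/|\nabla u|$. The bracket is therefore $|D^2u\,v|^2-\langle D^2u\,v,\nu\rangle^2=|(D^2u\,v)^{T}|^2$, the squared tangential component.

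We want to identify this with $|\nabla_{L_{u,x}}H(\nabla u)|^2$, possibly up to a constant. Here $\nabla H(\nabla u)$ means $\nabla_x[H(\nabla u)]=D^2u\,\nabla H(\nabla u)=D^2u\,v$, so its tangential projection is exactly $(D^2u\,v)^{T}$, and the two quantities agree exactly. Multiplying by $B''=\lambda_1$ yields the second term of \eqref{formulafondamentale}.

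\textbf{Step 4: collect constants.} Lemma \ref{Zumbrun} supplies $C_H\le1$ only up to normalization. Dividing the inequality by $\min(C_H,1)$ gives the statement with a constant depending only on $H$. On the right-hand side, extending the integral from $\{\nabla u\neq0\}$ to all of $\Omega$ is harmless because the integrand carries the factor $|\nabla u|^2$.

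The main obstacle is the justification in Steps 1--2. Both identities live on $\{\nabla u\neq0\}$, and the mixed terms and the $B''|\nabla u_i|^2$ terms must be integrable so that the subtraction is legitimate. Integrability follows from the $L^1$ bounds already used in Lemmas \ref{Lemma1} and \ref{Lemma2}, via \cite{CaRiSc,BMV}, and Cauchy--Schwarz then handles the mixed terms.
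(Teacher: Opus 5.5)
Your proposal is correct and follows the paper's proof essentially line by line. Like the paper, you subtract the linearized identity \eqref{equazione22} from the stability inequality \eqref{equazione144}, cancel the mixed terms via $|\nabla u|\nabla|\nabla u|=\sum_i u_i\nabla u_i$ and the symmetry of $D^2H$, identify the $B''$ bracket with $|\nabla_{L_{u,x}}H(\nabla u)|^2$, and apply Lemma \ref{Zumbrun} to the $B'$ bracket; your explicit check that $\nabla_x[H(\nabla u)]=D^2u\,\nabla H(\nabla u)$ and your normalization of the constant via $\min(C_H,1)$ make precise two points the paper only asserts.
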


\begin{proof}

Using Lemma \ref{Lemma1} and Lemma \ref{Lemma2}, in particular by \eqref{equazione22} and \eqref{equazione144} we deduce that 
\begin{equation}
    \begin{split}
     0\le &\int_{\Omega\cap\{\nabla u\neq 0\}} B''(H(\nabla u)) \langle \nabla H(\nabla u), \nabla |\nabla u|\rangle^2\varphi^2+B''(H(\nabla u)) \langle \nabla H(\nabla u), \nabla \varphi\rangle^2|\nabla u|^2 \\&+B''(H(\nabla u)) \langle \nabla H(\nabla u), \nabla |\nabla u|\rangle\langle \nabla H(\nabla u), \nabla \varphi\rangle2|\nabla u|\varphi\\&+B' (H(\nabla u))\langle D^2H(\nabla u)\nabla |\nabla u|,\nabla |\nabla u| \rangle\varphi ^2+B' (H(\nabla u))\langle D^2H(\nabla u)\nabla \varphi,\nabla \varphi \rangle |\nabla u|^2 \\ &+ B'(H(\nabla u)) \langle D^2H(\nabla u) \nabla |\nabla u|,\nabla \varphi\rangle |\nabla u|\varphi+B'(H(\nabla u)) \langle D^2H(\nabla u) \nabla \varphi,\nabla |\nabla u|\rangle |\nabla u|\varphi \\ &-\sum_{i=1}^NB''(H(\nabla u))\langle\nabla H(\nabla u),\nabla u_i \rangle \langle \nabla H(\nabla u),\nabla(u_i\varphi^2)\rangle \\&- \sum_{i=1}^N B'(H(\nabla u)) \langle D^2H(\nabla u) \nabla u_i,\nabla (u_i\varphi^2)\rangle\,dx.
    \end{split}
\end{equation}

Since $\nabla (u_i\varphi^2)= \nabla u_i \varphi ^2+2\varphi u_i\nabla \varphi$, we have 

\begin{equation}
    \begin{split}
    0\le &\int_{\Omega\cap\{\nabla u\neq 0\}} \left(B'(H(\nabla u))\langle D^2H(\nabla u)\nabla |\nabla u|,\nabla |\nabla u|\rangle -\sum_{i=1}^{N}B'(H(\nabla u))\langle D^2H(\nabla u)\nabla u_i,\nabla u_i\rangle\right)\varphi^2 \\&+\left(B''(H(\nabla u)) \langle \nabla H(\nabla u), \nabla |\nabla u|\rangle^2-\sum_{i=1}^NB''(H(\nabla u))\langle \nabla H(\nabla u),\nabla u_i\rangle^2\right)\varphi^2 \\&+\left(B''(H(\nabla u))\langle \nabla H(\nabla u),\nabla \varphi\rangle^2 +B'(H(\nabla u))\langle D^2H(\nabla u)\nabla \varphi,\nabla \varphi\right)|\nabla u|^2\\&+B'(H(\nabla u)) \langle D^2H(\nabla u) \nabla |\nabla u|,\nabla \varphi\rangle |\nabla u|\varphi+B'(H(\nabla u)) \langle D^2H(\nabla u) \nabla \varphi,\nabla |\nabla u|\rangle |\nabla u|\varphi \\ &+ B''(H(\nabla u))\langle \nabla H(\nabla u), \nabla |\nabla u|\rangle\langle \nabla H(\nabla u), \nabla \varphi\rangle2|\nabla u|\varphi\\&-\sum_{i=1}^NB''(H(\nabla u))\langle\nabla H(\nabla u),\nabla u_i \rangle \langle \nabla H(\nabla u),\nabla\varphi\rangle 2u_i\varphi \\&-\sum_{i=1}^NB'(H(\nabla u)) \langle D^2H(\nabla u) \nabla u_i,\nabla \varphi\rangle 2u_i\varphi\,dx.
    \end{split}
\end{equation}

Since $D^2H$ is a symmetric matrix and $|\nabla u| \nabla |\nabla u|=\sum_{i=1}^N\nabla u_i u_i$, we get 
\begin{equation}\label{equazione3}
    \begin{split}
        0\le &\int_{\Omega\cap\{\nabla u\neq 0\}} \left(B'(H(\nabla u))\langle D^2H(\nabla u)\nabla |\nabla u|,\nabla |\nabla u|\rangle-\sum_{i=1}^NB'(H(\nabla u))\langle D^2H(\nabla u)\nabla u_i,\nabla u_i\rangle\right)\varphi^2 \\&+\left(B''(H(\nabla u)) \langle \nabla H(\nabla u), \nabla |\nabla u|\rangle^2-\sum_{i=1}^NB''(H(\nabla u))\langle \nabla H(\nabla u),\nabla u_i\rangle^2\right)\varphi^2 \\&+\left(B''(H(\nabla u))\langle \nabla H(\nabla u),\nabla \varphi\rangle^2 +B' (H(\nabla u))\langle D^2H(\nabla u)\nabla \varphi,\nabla \varphi\rangle\right)|\nabla u|^2\,dx.
    \end{split}
\end{equation}

We note that 
\begin{equation}\label{normatangenziale}
\left|\nabla _{L_{u,x}}H(\nabla u)\right|^2=\sum_{i=1}^N\langle \nabla H(\nabla u),\nabla u_i\rangle^2-\langle \nabla H(\nabla u),\nabla |\nabla u|\rangle ^2.   
\end{equation}

Therefore \eqref{equazione3} becomes
\begin{equation}
    \begin{split}
       0\le &\int_{\Omega\cap\{\nabla u\neq 0\}} B'(H(\nabla u))\varphi^2\left(\langle D^2H(\nabla u)\nabla |\nabla u|,\nabla |\nabla u|\rangle-\sum_{i=1}^N\langle D^2H(\nabla u)\nabla u_i,\nabla u_i\rangle\right) \\ &-B''(H(\nabla u)) \left|\nabla _{L_{u,x}}H(\nabla u)\right|^2\varphi^2 \\&+\left(B''(H(\nabla u))\langle \nabla H(\nabla u),\nabla \varphi\rangle^2 +B' (H(\nabla u))\langle D^2H(\nabla u)\nabla \varphi,\nabla \varphi\rangle\right)|\nabla u|^2\,dx.
    \end{split}
\end{equation}

By Lemma \ref{Zumbrun}, we get

\begin{equation}\label{a1}
    \begin{split}
       & \int_{\Omega\cap\{\nabla u\neq 0\}} \frac{B'(H(\nabla u))}{H(\nabla u)}\varphi^2  |\nabla u|^2 \sum_{i=1}^{N-1} k_{i,u}^2  +B''(H(\nabla u)) \left|\nabla _{L_{u,x}}H(\nabla u)\right|^2\varphi^2\,dx \\ &\le C\int_{\Omega} \langle A(\nabla u)\nabla \varphi,\nabla\varphi\rangle\|\nabla u|^2,dx, 
    \end{split}
\end{equation}
where $k_{i,u}(x)$, for $i=1,...,N-1$, denote the principal curvature of the level set $L_{u,x}$ of $u$ at $x$ and $C_H$ is a positive constant depending on $H$.
We remark that the integral on the right-hand side of \eqref{a1} is well defined also on $\{x\in \Omega :\nabla u(x)=0\}. $

\end{proof}

Using the previous result we give 

\begin{proof}[Proof of Theorem \ref{DegiorgiN=2}]

If $\nabla u(x)=0$ for any $x\in \R^2$, the one-dimensional symmetry is trivial.

Since $|\nabla u|$ is bounded in $\R^2$ and $A(\nabla u)|\nabla u|^2 L^{\infty}_{loc}([0,+\infty))$ (by $(h_b)-(iii)$ and assumptions on $H$), by \eqref{formulafondamentale} we obtain 

\begin{equation}\label{prop2}
\begin{split}
      \int_{\{\nabla u\neq 0\}} &\frac{B'(H(\nabla u))}{H(\nabla u)}\varphi^2  |\nabla u|^2  k_{1,u}^2  +B''(H(\nabla u)) \left|\nabla _{L_{u,x}}H(\nabla u)\right|^2\varphi^2 \\ &\le C  \int_{\R^2} |\nabla \varphi |^2
      \end{split}
\end{equation}
for suitable $C>0$. 

We fix $R>0$, and let us consider the function 
\begin{equation}\label{cutfunction}
  \varphi:= \max \left \{0,\min \left\{1,\frac{\ln (R^2/|x|)}{\ln R}\right\}\right\}.
\end{equation}
We remark that $\varphi\in W^{1,\infty}(\Omega)$, compactly supported. Therefore by density argument we can use $\varphi$ in \eqref{prop2}.

Substituting \eqref{cutfunction} in \eqref{prop2} we get 
\begin{equation}\label{nullo1}
    k_1(x)=0
\end{equation}
and 
\begin{equation}\label{nullo2}
    \nabla _{L_{u,x}}H(\nabla u(x))=0
\end{equation}
for any $x\in \{\nabla u\neq 0\}.$

By \eqref{nullo1} and \eqref{nullo2}, every a non-empty connected component $\overline L$ of $L_{u,x}\cap \{\nabla u\neq 0\}$ is a flat hyperplane (for all details see \cite[Section $2.4$]{FSV}). Therefore $u$ is constant on these hyperplane, since each of them lies on a level set. On the contrary, $u$ is constant on any other possible hyperplane parallel to the ones of the above family, because the gradient is zero there. From this $u$ has one-dimensional symmetry. 

\end{proof}
\section{Preliminaries for the proof of Theorem \ref{DegiorgiN=3}}
In the first part of this section we aim to classify the solutions of \eqref{equazione}, in the case $N=1$. The second part is devoted to the study of solutions that are monotone in one direction.
\subsection{ODE analysis}

We start this section analysing the following ODE
\begin{equation}\label{ode}
    \left(B'(H(h'(t)))H'(h'(t))\right)'+f(h(t))=0
\end{equation}
for any $t\in \R$.

We consider weak solutions $h\in C^1(\R)\cap W^{1,\infty}(\R).$ Here, the function $H:\R\rightarrow \R$ is a Finsler norm, $B$ satisfies the assumptions $(h_B)$-$(iii)$ and $f$ is a continuous function.

\begin{rem}
    Since a Finsler norm $H:\R\rightarrow \R$ is given by $H(\xi)=c|\xi|,$ where $c$ is a positive constant, the ODE \eqref{ode} simplifies to the equation analyzed in \cite{FSV}. For the sake of completeness, we recall some results related to the ODE \eqref{ode} that will be used later. For the details of the proofs, we refer to \cite{FSV}.
\end{rem}

\begin{lem}\label{basta2}
   Let $h$ is a weak solution of \eqref{ode}, then we have 
    \begin{equation}\label{formulina}
        \Lambda_1(H(h'(t))+F(h(t))=F(\inf h)=F(\sup h)
    \end{equation}
     for any $t\in \R$. 
\end{lem}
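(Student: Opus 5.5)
We want the first integral $\Lambda_1(H(h'))+F(h)\equiv\text{const}$, the constant being $F(\inf h)=F(\sup h)$. Write $\Phi(\xi):=B'(H(\xi))H'(\xi)$. Since $H(\xi)=c|\xi|$, we have $\Phi(\xi)=c\,B'(c|\xi|)\,\mathrm{sign}(\xi)$, which is odd, continuous, and strictly increasing on $\R$.

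\textbf{Step 1: regularity of the flux.} The weak formulation says $(\Phi(h'))'=-f(h)$ in distributions. The right-hand side is continuous, so $\Phi(h')\in C^1(\R)$. Because $\Phi$ is a homeomorphism, $h'=\Phi^{-1}(\Phi(h'))$ is continuous. On the open set $\{h'\neq0\}$ the map $\Phi$ is a $C^1$ diffeomorphism, since $B''>0$ there, so $h\in C^2$ on that set.

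\textbf{Step 2: derivative of the energy.} On $\{h'\neq 0\}$, set $t_0:=H(h')=c|h'|$. Then $\frac{d}{dt}\Lambda_1(H(h'))=B''(t_0)\,t_0\,\frac{d}{dt}(c|h'|)$. On the other hand,
\[
\frac{d}{dt}\Phi(h')=c^2B''(c|h'|)\,h''.
\]
Multiplying by $h'$ gives $h'\,\frac{d}{dt}\Phi(h')=B''(t_0)\,t_0\,\frac{d}{dt}(c|h'|)$. Hence $\frac{d}{dt}\big[\Lambda_1(H(h'))+F(h)\big]=h'\big[(\Phi(h'))'+f(h)\big]=0$ there.

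\textbf{Step 3: the critical set.} The main obstacle is the set $\{h'=0\}$, where $h$ may fail to be $C^2$. I would avoid computing there. Instead, I would show that $E(t):=\Lambda_1(H(h'(t)))+F(h(t))$ is locally Lipschitz with $E'=0$ almost everywhere. A cleaner route is to use the variable $s=\Phi(h')$, which is $C^1$. One can write $\Lambda_1(H(h'))=G(\Phi(h'))$ with $G(\sigma)=\int_0^{\sigma}\Phi^{-1}(\tau)\,d\tau$; this follows by the change of variables $\tau=\Phi(\xi)$ in $\int \xi\,\Phi'(\xi)\,d\xi$. Since $G$ is $C^1$ with $G'=\Phi^{-1}$, the chain rule gives, everywhere,
\[
E'=h'\,(\Phi(h'))'+f(h)\,h'=0.
\]
So $E$ is constant, and this handles the critical points with no case analysis.

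\textbf{Step 4: identifying the constant.} Since $h\in W^{1,\infty}$ is monotone or not, I would argue as in \cite{FSV}. Suppose $\sup h$ is finite. Take a sequence $t_k$ with $h(t_k)\to\sup h$. Either $\sup h$ is attained, in which case $h'=0$ there, or $t_k\to\pm\infty$.

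In the second case I need $h'(t_k)\to0$ along some such sequence. Since $h'$ is bounded, $h$ is uniformly continuous, and $\Phi(h')$ is uniformly Lipschitz because $f(h)$ is bounded. Consequently $h'$ is uniformly continuous. A bounded function $h$ converging along a sequence to its supremum, with uniformly continuous derivative, admits points where $h$ is near $\sup h$ and $h'$ is near $0$; this is a standard Barbalat-type argument.

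Passing to the limit in $E$ gives $E\equiv F(\sup h)$, using $\Lambda_1(0)=0$. The same argument gives $E\equiv F(\inf h)$. If $h$ is unbounded, the corresponding equality should be interpreted, or excluded, as in \cite{FSV}. I would follow their convention here.
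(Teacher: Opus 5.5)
Your proof is correct. The paper gives no argument of its own for this lemma and only refers to \cite{FSV}, so your proposal supplies a complete, self-contained one.

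The key step is Step~3. You write $\Lambda_1(H(h'))=G(\Phi(h'))$ with $G'=\Phi^{-1}$. This is the one-dimensional Legendre identity $\xi\Phi(\xi)-\int_0^\xi\Phi=H(\xi)B'(H(\xi))-B(H(\xi))=\Lambda_1(H(\xi))$, consistent with the formula $\Lambda_1(t)=B'(t)t-B(t)$ used later in the paper. It makes $E$ a $C^1$ function of the $C^1$ flux, so $E'=0$ holds everywhere, including on $\{h'=0\}$, where $h$ need not be $C^2$. A more hands-on alternative would be to differentiate only on the open set $\{h'\neq0\}$. One would then check separately that $E$ has zero derivative at each critical point $t_0$, using $\Lambda_1(s)\le sB'(s)$ and $|\Phi(h'(t))|\le C|t-t_0|$. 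Your version packages that into a single chain-rule computation.

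For the value of the constant, you use a uniform-continuity (Barbalat-type) argument on $h'$. This replaces a compactness argument on the translates $h(\cdot+t_k)$, which is the kind of argument the paper uses in Theorem~\ref{analisiinfinito}. Both approaches work, and yours is more elementary.

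A few clean-ups:
\begin{itemize}
\item $h\in W^{1,\infty}(\R)$ is bounded, so your closing hedge about unbounded $h$ is unnecessary.
\item Step~2 is redundant once Step~3 is in place.
\item The Barbalat step deserves its two lines. Suppose $|h'(t_k)|\ge\varepsilon$ along a subsequence. Uniform continuity of $h'$ gives a fixed $\eta>0$ such that $|h'|\ge\varepsilon/2$, with constant sign, on $[t_k-\eta,t_k+\eta]$. Hence one of $h(t_k\pm\eta)$ is at least $h(t_k)+\varepsilon\eta/2$. This exceeds $\sup h$ once $\sup h-h(t_k)<\varepsilon\eta/2$, a contradiction. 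In fact this shows $h'(t_k)\to0$ along every sequence with $h(t_k)\to\sup h$, not just along some sequence.
\end{itemize}
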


We remark that, as a consequence of Lemma \ref{basta2}, 
\begin{equation}\label{remarkino}
 F(\inf h)=F(\sup h)=F(h(t^*)),   \end{equation}
for any $t^*\in \{h'=0\}.$

From previous result follows  the classification of the ODE in \eqref{ode}:

\begin{lem}\label{classificazioneode}
    One of the following holds:
    \begin{itemize}
        \item [(i)] $h$ is constant
        \item [(ii)] $\{h'=0\}=\emptyset$
        \item [(iii)] $h'(t)\neq 0$ for any $t$ in a bounded interval $(\beta_1,\beta_2)$ with $h'(\beta_1)=h'(\beta_2)=0$ and 
        \begin{equation}\label{abbaia2}
            F(h(\beta_1))=F(h(\beta_2))=F(\inf h)=F(\sup h).
        \end{equation}
        \item [(iv)] $h'(t)\neq 0$ for any $t$ in a unbounded interval either $(-\infty,\beta)$ or $(\beta,+\infty)$ with $h'(\beta)=0$ and 
        \begin{equation}\label{abbaia3}
           F(h(\beta))=F(\inf h)=F(\sup h).
        \end{equation}
    \end{itemize}
\end{lem}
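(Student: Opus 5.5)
The classification follows from the first integral in Lemma \ref{basta2} combined with a decomposition of the open set $\{h'\neq 0\}$. I will split according to whether the closed set $Z:=\{t\in\R : h'(t)=0\}$ is all of $\R$, empty, or a nonempty proper subset.

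If $Z=\R$, then $h$ is constant and (i) holds. If $Z=\emptyset$, we are in case (ii). Otherwise $Z$ is a nonempty closed proper subset of $\R$, so its complement $\R\setminus Z$ is a nonempty open set. This set is a countable union of disjoint open intervals. Fix one connected component $I$.

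There are two possibilities for $I$.

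If $I$ is bounded, say $I=(\beta_1,\beta_2)$, then its endpoints lie in $Z$ by maximality of the component, so $h'(\beta_1)=h'(\beta_2)=0$. This is case (iii).

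If $I$ is unbounded, it cannot be all of $\R$, since $Z\neq\emptyset$. Hence $I$ is of the form $(-\infty,\beta)$ or $(\beta,+\infty)$ with $\beta\in Z$, so $h'(\beta)=0$. This is case (iv).

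It remains to establish the energy identities \eqref{abbaia2} and \eqref{abbaia3}. Evaluate \eqref{formulina} at a point $t^*\in Z$. Since $H(0)=0$ and $\Lambda_1(0)=0$, this gives $F(h(t^*))=F(\inf h)=F(\sup h)$, which is precisely \eqref{remarkino}. Applying it with $t^*=\beta_1$, $t^*=\beta_2$, or $t^*=\beta$ yields \eqref{abbaia2} and \eqref{abbaia3} respectively.

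The only substantive input is Lemma \ref{basta2}; the rest is topology of $\R$. The one point needing care is that the weak formulation of \eqref{ode} gives $h\in C^1$, so that $Z$ is closed. This follows from the assumption $h\in C^1(\R)$, and it guarantees that the endpoints of each component really are critical points. If the statement is meant to assert that exactly one configuration describes all of $\R$, rather than describing each component, I would not attempt that: several bounded components can coexist, as in the degenerate situation $1<p<2$. In that case I would read (iii) and (iv) componentwise, as in \cite{FSV}.
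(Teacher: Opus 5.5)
Your argument is correct and is essentially the paper's route: the paper obtains the classification directly from the first integral of Lemma \ref{basta2} (via \eqref{remarkino}, i.e.\ evaluating at a critical point where $\Lambda_1(H(0))=0$) together with the decomposition of $\{h'\neq 0\}$ into maximal open intervals, deferring details to \cite{FSV}; your existential, componentwise reading is also how the lemma is later used in Lemma \ref{classificazioneode2}. One small correction to your closing aside: coexisting bounded components are not tied to any particular range of $p$, since already $h(t)=\sin t$ solves $h''+h=0$ (the case $p=2$) and has infinitely many of them; moreover, $1<p<2$ is the singular regime, not the degenerate one.
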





\subsection{Behavior of the one-dimensional profiles at infinity}

Given $v: \R^N\rightarrow \R,$ we set $v^t(x):=v(x',x_N+t)$, for any $x=(x',x_N)\in \R^{N-1} \times \R$ and $t\in R$. We now prove a Poincarè type formula, in one dimension less.


\begin{thm}\label{analisiinfinito}
    Let $u\in C^{1,\alpha}_{loc}(\R^N)\cap W^{1,\infty}(\R^N)$ be a stable weak solution of \eqref{equazione}. 
    Suppose $\partial _{x_N} u\ge 0.$
    We set 
    \begin{equation}\label{ubassousopra}
    \begin{split}
        &\underline u(x'):=\lim_{t\rightarrow -\infty} u(x',x_N+t) \\
       & \overline u(x'):=\lim_{t\rightarrow +\infty} u(x',x_N+t) \\
       & \overline H (x')=\underline H(x'):=H(x',0) \\
        &\tilde A_{ij}(\xi):=A_{ij}(\xi) \qquad i,j=1,...,N-1.
        \end{split}
    \end{equation}

Then $\underline u,\overline u\in C^1(\R^{N-1})$ are weak solutions of \eqref{equazione} in $\R^{N-1}$. Moreover, we have

 \begin{equation}\label{formulafondamentale2}
       \begin{split}
       & \int_{\R^{N-1}\cap\{\nabla \overline u\neq 0\}} \lambda_2(\overline H(\nabla \overline u))\varphi^2  |\nabla \overline u|^2 \sum_{i=1}^{N-2} k_{i,\overline u}^2  +\lambda_1(\overline H(\nabla \overline u)) \left|\nabla _{L_{\overline u,x'}}\overline H(\nabla \overline u)\right|^2\varphi^2 \\ &\le C\int_{\R^{N-1}} \langle \tilde A(\nabla \overline u)\nabla\varphi,\nabla\varphi\rangle|\nabla \overline u|^2 
    \end{split}
\end{equation}
    for any  $\varphi\in C^1_c(\R^{N-1})$ and $C$ is a positive constant depending on $\overline{H}$ (and analogous claim holds for $\underline u$.)
    \end{thm}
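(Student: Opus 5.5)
Since $\partial_{x_N}u\ge 0$ and $u\in W^{1,\infty}$, the limits $\overline u(x')$, $\underline u(x')$ exist pointwise by monotonicity and boundedness of $t\mapsto u(x',x_N+t)$; they do not depend on $x_N$. The plan is a compactness and stability-passage argument along the translates $u^t$ as $t\to\pm\infty$, followed by an application of Theorem \ref{finirà} in dimension $N-1$.

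\textbf{Step 1 (compactness).} The translates $u^t$ are uniformly bounded in $C^{1,\alpha}_{loc}(\R^N)$: the gradient bound holds by hypothesis, and the $C^{1,\alpha}$ estimates of \cite{DB,T,CoFaVa1} depend only on $\|\nabla u\|_\infty$ and on the data. By Ascoli–Arzelà, along a sequence $t_k\to+\infty$ we get $u^{t_k}\to\overline u$ in $C^1_{loc}(\R^N)$, and the limit is unique, so the whole family converges. Since the limit is independent of $x_N$, we have $\partial_N\overline u=0$, so $\nabla\overline u=(\nabla'\overline u,0)$ and $H(\nabla \overline u)=\overline H(\nabla'\overline u)$. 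Because $B'(H(\cdot))\nabla H(\cdot)$ is continuous, we can pass to the limit in \eqref{formulazionedebole} with test functions $\varphi(x')\chi(x_N)$, then divide by $\int\chi$. This shows $\overline u$ is a weak solution in $\R^{N-1}$ for the operator built from $\overline H$. Here one notes that the first $N-1$ components of $\nabla H(\xi',0)$ are exactly $\nabla\overline H(\xi')$, and that $\overline H$ is again a Finsler norm satisfying $(h_H)$, being the restriction of $H$ to a hyperplane.

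\textbf{Step 2 (stability of the limit).} This is the main obstacle. The plan is to use the stability inequality \eqref{stabilità} for $u^t$, which holds because translates of stable solutions are stable, with test functions $\varphi(x')\chi_R(x_N)$, and to pass to the limit $t\to\infty$.

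On compact subsets of $\{\nabla\overline u\neq0\}$ the matrix $A(\nabla u^t)$ converges uniformly to $A(\nabla\overline u)$, so the quadratic form converges there. Near the critical set of $\overline u$, $A$ may blow up or degenerate. To handle this, I would rely, as the paper does, on test functions supported away from $Z_{\overline u}$; this is all that the proofs of Lemma \ref{Lemma2} and Theorem \ref{finirà} actually exploit. In Lemma \ref{Lemma2} the test function $G_\epsilon(|\nabla\overline u|)\varphi$ is supported in $\{|\nabla\overline u|\ge\epsilon\}$, where convergence is uniform.

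After passing to the limit, the $x_N$-derivative of the test function contributes only terms involving $\chi_R'$, of order $R^{-1}$ relative to $\int\chi_R^2\sim R$. Dividing by $\int\chi_R^2$ and letting $R\to\infty$ gives the stability inequality for $\overline u$ in $\R^{N-1}$. The limiting matrix is the upper-left block $\tilde A(\nabla\overline u)$, which coincides with $A_{\overline H}(\nabla'\overline u)$ since $D^2\overline H$ is the corresponding block of $D^2H$. In Lemma \ref{Lemma1} the linearized equation for $\overline u$ comes directly from Step 1, together with the $W^{1,2}_{loc}$ regularity of the stress field of $\overline u$.

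\textbf{Step 3 (conclusion).} With $\overline u$ a stable weak solution in $\R^{N-1}$ for the Finsler norm $\overline H$, apply Theorem \ref{finirà} with $N$ replaced by $N-1$ and $\Omega=\R^{N-1}$. This yields \eqref{formulafondamentale2} with a constant $C$ depending on $\overline H$ through the ellipticity constant in Lemma \ref{Zumbrun}. The case of $\underline u$ is identical, taking $t\to-\infty$.
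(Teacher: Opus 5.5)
Your proposal is correct, but it follows a different route from the paper's.

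\textbf{The paper's route.} The paper never shows that $\overline u$ is stable. It applies the inequality \eqref{formulafondamentale}, already proved in $\R^N$, to the translates $u^t$ with test functions $\varphi_1(x')\sqrt{\mu}\,\psi(\mu x_N)$, and passes to the limit in that final inequality.
\begin{itemize}
\item On the left-hand side this needs $C^{2,\alpha'}$ convergence of $u^t$ on $\{|\nabla\overline u|\ge\epsilon\}$, obtained from Schauder estimates. It also needs the limit of the tangential gradients, and the Euclidean Sternberg--Zumbrun identity to show that the level-set curvatures of $u^t$ converge to those of the cylindrical level sets of $\overline u$.
\item On the right-hand side it uses only $C^1$ convergence, and then sends $\mu\to0$ to remove the entries of $A$ involving $x_N$.
\end{itemize}

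\textbf{Your route.} You pass to the limit only in the stability form. That form involves first derivatives of $u^t$ alone, so uniform $C^1$ convergence on compact subsets of $(\R^{N-1}\setminus Z_{\overline u})\times\R$ is enough. You then rerun Theorem \ref{finirà} intrinsically in $\R^{N-1}$ with the restricted norm $\overline H$.

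The cost is a few checks the paper does not need:
\begin{itemize}
\item $\overline H$ satisfies $(h_H)$. This holds because $D^2\overline H(\xi')$ is the upper-left block of $D^2H(\xi',0)$, and $v'\perp\nabla\overline H(\xi')$ implies $(v',0)\perp\nabla H(\xi',0)$. It also gives $A_{\overline H}=\tilde A$.
\item The regularity inputs behind Lemmas \ref{Lemma1}--\ref{Lemma2} apply to $\overline u$ as a weak solution in $\R^{N-1}$. These are the $W^{1,2}_{loc}$ stress field and the $L^1$ bounds of \cite{CaRiSc,BMV}.
\item Stability tested only against functions supported away from $Z_{\overline u}$ is enough. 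You correctly note this is all the proof of Lemma \ref{Lemma2} uses.
\end{itemize}

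What you gain is a stronger intermediate statement, namely stability of $\overline u$ for such test functions. You also avoid any second-order convergence and any curvature-limit argument. Finally, your constant comes directly from Lemma \ref{Zumbrun} applied to $\overline H$, which matches the statement's claim that $C$ depends on $\overline H$.

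One small point: when you derive the weak equation for $\overline u$, say explicitly that the $\chi'(x_N)$ term drops out because its coefficient is independent of $x_N$ and $\int_\R\chi'=0$. "Divide by $\int\chi$" only works after that.
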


\begin{proof}
   We note that $\underline u$ and $\overline u$ are well defined since $u$ is monotone in the direction $x_N$ and bounded.
   Since $u\in W^{1,\infty}(\R^N)$, by \cite[Proposition $3.1$]{CoFaVa2} and \cite[Theorem $1$]{T}, $$\|u_t\|_{C^{1,\alpha}(\R^N)}\le C,$$
   where $C$ is a positve constant not depending on $t$. From Ascoli-Arzela Theorem we deduce that $u^t$ tends to $\overline{u}$, for $t\rightarrow  +\infty$, in the norm $\|\cdot\|_{C^{1,\alpha '}_{loc}(\R^N)}$, with $\alpha '<\alpha.$ (same claim for $\underline u$.)

We now fix $\epsilon >0$ and we set $A_\epsilon:=\{(x',x_N)\in \R\times \R^{N-1}\text{ such that } |\nabla \overline u|\ge \epsilon\}$. We consider a compact set $K\subset\subset A_\varepsilon$. Since $u_t$ converges to $\overline{u}$ in the norm $C^1(K)$, we deduce that $|\nabla u(x',x_N+t)|\ge \varepsilon/2$, for $t$ sufficiently large.  Since $u\in C^{2,\alpha}_{loc}(\R^N\setminus \{\nabla u\neq 0\})$, by Shauder estimates we obtain 
$$\|u_t\|_{C^{2,\alpha}(K)}\le C,$$ where $C$ is a positive constant not depending on $t$.
From Ascoli theorem $u^t$ tends to $\overline{u}$, for $t\rightarrow  +\infty$, in the norm $\|\cdot\|_{C^{2,\alpha '}_{loc}(A_\epsilon)}$ (same claim for $\underline u$.)

Let us consider $\varphi:=\varphi_1(x')\varphi_2(x_N)$ with $\varphi_1\in C^1_c(\R^{N-1})$ and $\varphi\in C^1_c(\R)$ satisfying 
\begin{equation}\label{zicur}\int_\R \varphi_2(x_N) dx_N=1.\end{equation}

We note that $u_t$ is a weak solution of \eqref{equazione}, that is 
\begin{equation}\label{formulazionedebole33}
    \int_{\R^N}B'(H(\nabla  u_t))\langle \nabla H(\nabla u_t),\nabla \varphi \rangle \,dx=\int_{\R^N} f( u_t)\varphi\,dx
\end{equation}
Substituting $\varphi$ in \eqref{formulazionedebole33}, passing to the limit, we get 

\begin{equation}
    \int_{\R^N}B'(H(\nabla \overline u))\langle \nabla H(\nabla \overline{u}),\nabla \left(\varphi_1(x')\varphi_2(x_N)\right)\rangle \,dx'\,dx_N=\int_{\R^N} f(\overline u)\varphi_1(x')\varphi_2(x_N)\,dx'\,dx_N. 
\end{equation}

By \eqref{zicur} and recalling \eqref{ubassousopra} we have 

\begin{equation}\label{formulazionedebole21}
\begin{split}
    &\int_{\R^{N-1}} B'(\overline H(\nabla \overline u))\langle \nabla \overline H(\nabla \overline u),\nabla \varphi_1\rangle dx'\\&+ \int_{\R^N}B'(\overline H(\nabla \overline u(x')))\partial_{x_N}  H(\nabla \overline u(x')))\varphi_2'(x_N)\varphi_1(x')\,dx'\,dx_N =\int_{\R^{N-1}}f(\overline u)\varphi_1\,dx'.
    \end{split}
\end{equation}

Since $\varphi_2$ is compactly supported in $\R$, we deduce $$\int_\R \varphi_2'(s)\,ds=0.$$ By Fubini Theorem we get
\begin{equation}\label{formulazionedebole2}
    \int_{\R^{N-1}} B'(\overline H(\nabla \overline u))\langle \nabla \overline H(\nabla \overline u),\nabla \varphi_1\rangle \,dx' =\int_{\R^{N-1}}f(\overline u)\varphi_1\,dx',
\end{equation}
 for any $\varphi_1\in C^1_c(\R^{N-1})$, and the same for $\underline u$.

Since $ u(x',x_N+t)\rightarrow \overline u$, for $t\rightarrow +\infty$, in the norm $\|\cdot\|_{C^{2,\alpha'}_{loc}(A_\epsilon)}$, and recalling the definition of the tangential gradient \eqref{tangenzialegradiente} we deduce
\begin{equation}\label{tang}
    \begin{split}
        &\lim_{t\rightarrow +\infty} \nabla_{L_{u,(x',x_N+t)}}H(\nabla u)(x',x_N+t)\\&=\lim_{t\rightarrow +\infty} D^2u(x',x_N+t) \nabla H(\nabla u(x',x_N+t))\\&- \langle D^2u(x',x_N+t) \nabla H(\nabla u(x',x_N+t)), \frac{\nabla u(x',x_N+t)}{|\nabla u(x',x_N+t)|}\rangle\frac{\nabla u(x',x_N+t)}{|\nabla u(x',x_N+t)|}\\&=\nabla _{L_{\overline u,x'}}\overline H(\nabla \overline u))(x'),
        \end{split}
\end{equation}
in $x'\in\{|\nabla \overline u|\ge \varepsilon\}$.

By \cite{SZ1,SZ2} we recall that 
\begin{equation}\label{ZumbrunEuclideo}
\begin{split}
   &\sum_j |\nabla u_j(x',x_N+t)|^2-\left|\nabla _{L_{u,(x',x_N+t)}}|\nabla u(x',x_N+t)|\right|^2\\&-|\nabla|\nabla u(x',x_N+t)||^2=|\nabla u(x',x_N+t)|^2\sum_l k_{l,u^t}^2 
\end{split}
\end{equation}
where $k_{1,u^t},..., k_{(N-1),u^t}$ denote the principal curvatures of $L_{u,(x',x_N+t)}$.

Using \eqref{tang} with $H=|\cdot|$ and by \eqref{ZumbrunEuclideo} we obtain 
\begin{equation}\label{kii}
    |\nabla \overline{u}(x')|^2\sum_{l=1}^{N-2}k_{l,\overline{u}}^2(x')=\lim_{t\rightarrow +\infty} |\nabla u(x',x_N+t)|^2\sum_l k_{l,u^t}^2,
\end{equation}

where we denote with $k_{i,\overline u}(x')$ the $i-$th principal curvature of $L_{\overline u,x'}$ at $x'\in L_{\overline u,x'}\cap \{|\nabla \overline u|\ge \epsilon\}$.

 We now consider $\varphi:=\varphi_1(x')\varphi_2(x_N)$ with $\varphi_1\in C^1_c(\R^{N-1})$ and $\varphi_2\in C^1_c(\R)$. In particular, we take $\varphi_2:=\sqrt{\mu}\psi(\mu x_N)$, where $\mu >0$ and $\psi\in C^1_c(\R)$ such that 
 \begin{equation}\label{psi}
     \int_\R \psi^2(x_N)dx_N=1\quad \text{and} \quad \int_\R \varphi^2(x_N)dx_N=1.
 \end{equation}

We remark that, by Theorem \ref{finirà}, \eqref{formulafondamentale} holds for $u_t$. Using \eqref{tang}, \eqref{kii}, \eqref{psi} and  \eqref{formulafondamentale}, we have 
\begin{equation}\label{forms}
       \begin{split}
       & \int_{\{\nabla \overline u\ge \epsilon\}} \frac{B'(\overline H(\nabla \overline u))}{\overline H(\nabla \overline u)}\varphi_1^2  |\nabla \overline u|^2 \sum_{i=1}^{N-2} k_{i,\overline u}^2  +B''(\overline H(\nabla \overline u)) \left|\nabla _{L_{\overline u,x'}}\overline H(\nabla \overline u)\right|^2\varphi_1^2 \,dx'\\ &= \int_\R \int_{\{\nabla \overline u\ge \epsilon\}} \frac{B'(\overline H(\nabla \overline u))}{\overline H(\nabla \overline u)}\varphi_1^2  |\nabla \overline u|^2 \sum_{i=1}^{N-2} k_{i,\overline u}^2  +B''(\overline H(\nabla \overline u)) \left|\nabla _{L_{\overline u,x'}}\overline H(\nabla \overline u)\right|^2\varphi_1^2\varphi_2^2 \,dx'\,dx_N \\&=\lim_{t\rightarrow +\infty}\int_\R \int_{\{\nabla \overline u\ge \epsilon\}} \frac{B'( H(\nabla  u^t))}{ H(\nabla  u^t)}\varphi_1^2  |\nabla  u^t|^2 \sum_{i=1}^{N-1} k_{i, u^t}^2  \\ &\qquad\qquad\qquad\qquad+B''( H(\nabla  u^t)) \left|\nabla _{L_{ u^t,(x',x_N+t)}} H(\nabla  u^t)\right|^2\varphi_1^2\varphi_2^2 \,dx'\,dx_N \\& \le C_H \lim_{t\rightarrow +\infty} \int_{\R^N} \langle A(\nabla u^t)\nabla (\varphi_1\varphi_2),\nabla (\varphi_1\varphi_2)\rangle |\nabla u^t|^2 \\&=\int_{\R^N} \langle A(\nabla \overline u)\nabla \varphi,\nabla \varphi\rangle |\nabla \overline u|^2 \,dx'\,dx_N. 
    \end{split}
\end{equation}

where $C_H$ is a positive constant depending on $H$.

We note that 
\begin{equation}\label{spacchetamento}
\begin{split}
   \langle A(\nabla \overline u)\nabla \varphi,\nabla \varphi\rangle &=\langle \tilde A(\nabla \overline u)\nabla_{x'} \varphi,\nabla_{x'} \varphi\rangle \\&+\sum_{i=1}^N A_{in}(\nabla \overline u)\partial_{x_i}\varphi\partial_{x_N}\varphi\\&+\sum_{j=1}^{N-1} A_{nj}(\nabla \overline u)\partial_{x_j}\varphi\partial_{x_N}\varphi,
   \end{split}
\end{equation}

where $\tilde A_{ij}=A_{ij}$, for $i,j=1,...,N-1$ and $\nabla_{x'}$ is the gradient of the variables $x_1,...,x_{N-1}$. Now we prove that
\begin{equation}\label{cosanuova}
    \int_{\R^N} |\nabla \overline u|^2 \sum_{i=1}^N A_{in}(\nabla \overline u)\partial_{x_i}\varphi\partial_{x_N}\varphi \,dx=\int_{\R^N} |\nabla \overline u|^2 \sum_{j=1}^{N-1} A_{nj}(\nabla \overline u)\partial_{x_j}\varphi\partial_{x_N}\varphi \,dx=0.
\end{equation}

Since $u\in W^{1,\infty}(\R^N)$ and by definition of $\varphi$  we obtain 

\begin{equation}
    \begin{split}
       &\int_{\R^N} \sum_{j=1}^{N-1} |\nabla \overline u|^2 A_{nj}(\nabla \overline u)\partial_{x_j}\varphi\partial_{x_N}\varphi \,dx \\ & \le C \int_{\R^N}\sum_{j=1}^{N-1}  \partial_{x_j}\varphi_1(x')\varphi_2(x_N)\partial_{x_N}\varphi_2(x_N)\varphi_1(x')\,dx'\,dx_N \\&=C\sum_{j=1}^{N-1}  \int_{\R^{N-1}}\partial_{x_j}\varphi_1(x')\varphi_1(x')\,dx'\int_\R\partial_{x_N}\varphi_2(x_N)\varphi_2(x_N)\,dx_N \\& \le C\int_\R\partial_{x_N}\varphi_2(x_N)\varphi_2(x_N)\,dx_N \\&\le C\int_\R\mu^2\psi(\mu x_N)\psi'(\mu x_N)\,dx_N\\&=C\mu \int_\R \psi(y_N)\psi'(y_N)\,dy_N\rightarrow 0,
    \end{split}
\end{equation}
for $\mu\rightarrow 0$ and $C$ is a positive constant depending on $u,H$ and  $B$. In a similar way we deduce that 
\begin{equation}
    \int_{\R^N} |\nabla \overline u|^2 \sum_{i=1}^N A_{in}(\nabla \overline u)\partial_{x_i}\varphi\partial_{x_N}\varphi \,dx=0,
\end{equation}
hence \eqref{cosanuova} holds.

Taking $\epsilon$ arbitrarily small in \eqref{forms}, using \eqref{spacchetamento},  \eqref{cosanuova} and \eqref{psi}, we have 
\begin{equation}
    \begin{split}
        & \int_{\{\nabla \overline u\neq 0\}} \frac{B'(\overline H(\nabla \overline u))}{\overline H(\nabla \overline u)}\varphi_1^2  |\nabla \overline u|^2 \sum_{i=1}^{N-2} k_{i,\overline u}^2  +B''(\overline H(\nabla \overline u)) \left|\nabla _{L_{\overline u,x'}}\overline H(\nabla \overline u)\right|^2\varphi_1^2 dx'\\ &\le C\int_{\R^N} \langle \tilde A(\nabla \overline u)\nabla_{x'} \varphi,\nabla_{x'} \varphi\rangle |\nabla \overline u|^2 \,dx'\,dx_N 
        \\&=C \int_{\R^N} \langle \tilde A(\nabla \overline u)\nabla \varphi_1,\nabla \varphi_1\rangle |\nabla \overline u|^2 \,dx',
    \end{split}
\end{equation}

where $C$ is a positive constant depending on $\overline{H}.$

\end{proof}

We now recall a result (see \cite[Theorem $1.1$]{CoFaVa2}) of a pointwise energy bound in an anisotropic setting. Similar estimates were given in \cite{CGS,DG}.

\begin{lem}[ see \cite{CoFaVa2}]
Let $u\in C^{1,\alpha}_{loc}(\R^N)\cap W^{1,\infty}(\R^N)$ a weak solution of \eqref{equazione}. We set 
\begin{equation}
c_u:=\sup_{t\in [\inf u,\sup u]} F(t).
\end{equation}
Then we have 
\begin{equation}
    \Lambda_1(H(\nabla u(x)))=B'(H(\nabla u(x)))H(\nabla u(x))-B(H(\nabla u(x)))\le c_u-F(u(x)).
\end{equation}
\end{lem}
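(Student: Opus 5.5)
The plan is to follow the Modica / Caffarelli--Garofalo--Segala $P$-function method, adapted to the anisotropic operator as in \cite{CoFaVa2}. Recall first that $\Lambda_1(t)=\int_0^t B''(s)s\,ds=B'(t)t-B(t)$ by integrating by parts, so the identity in the statement is immediate. The real claim is the inequality $P\le 0$ for
\begin{equation*}
P(x):=\Lambda_1(H(\nabla u(x)))+F(u(x))-c_u .
\end{equation*}
I would argue by contradiction, assuming $P_0:=\sup_{\R^N}P>0$. This supremum is finite because $u\in W^{1,\infty}$ and $u$ is bounded; $u$ is bounded since one can reduce to $u$ bounded, as in the applications where $\partial_{x_N}u\ge0$ and $u$ is bounded.

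\textbf{Step 1: differential inequality for $P$.} On the open set $\{\nabla u\neq0\}$, where $u\in C^{2,\alpha}$, I would show that $P$ satisfies a linear elliptic inequality of the form
\begin{equation*}
\operatorname{div}\big(A(\nabla u)\nabla P\big)+\langle b,\nabla P\rangle\ \ge\ \frac{c\,|\nabla P|^2}{\cdots}\ \ge 0,
\end{equation*}
with $A$ as in \eqref{matrice} and a locally bounded drift $b$. The computation differentiates $P$ and uses the linearized equation \eqref{u_irisolvelinearizzato} for $u_i$. It also uses $\nabla_\xi\Lambda_1(H(\xi))=B''(H)H\nabla H$, Euler's identity \eqref{eulero} and $D^2H(\xi)\xi=0$ to rewrite $\nabla P$ as $B''(H)H\,D^2u\,\nabla H+f(u)\nabla u$. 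The positive term is produced by a Cauchy--Schwarz inequality in the metric $A(\nabla u)$, which is positive definite by Lemma \ref{matricedefinitapositiva}. This is the main computational obstacle. The anisotropic Hessian terms, in particular $D^2H(\nabla u)$ acting on $D^2u$, must combine exactly as in the isotropic case. To organize it I would work in the frame $\nabla H(\nabla u)$ and its complement, following \cite{CoFaVa2}. If degeneracy of $B''$ near $0$ causes trouble, I would first regularize: replace $B$ by a nondegenerate $B_\varepsilon$ (adding $\varepsilon t^2/2$), solve approximating problems in large balls, prove the bound there, and pass to the limit using the uniform $C^{1,\alpha}$ estimates.

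\textbf{Step 2: compactness.} Take $x_k$ with $P(x_k)\to P_0$ and set $u_k:=u(\cdot+x_k)$. By the uniform $C^{1,\alpha}$ bounds of \cite{T,CoFaVa2} and Ascoli--Arzel\`a, $u_k\to u_\infty$ in $C^1_{loc}$. Here $u_\infty$ is again a weak solution with the same bounds and $[\inf u_\infty,\sup u_\infty]\subseteq[\inf u,\sup u]$, so $F(u_\infty)-c_u\le 0$. Moreover $P_\infty\le P_0$ everywhere and $P_\infty(0)=P_0>0$. At $0$ we must have $\nabla u_\infty(0)\neq0$, since otherwise $P_\infty(0)=F(u_\infty(0))-c_u\le0$.

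\textbf{Step 3: maximum principle and contradiction.} By the strong maximum principle applied to the inequality of Step 1, $P_\infty\equiv P_0$ on the connected component $U$ of $\{\nabla u_\infty\neq0\}$ containing $0$. On $U$ we have $\Lambda_1(H(\nabla u_\infty))=P_0+c_u-F(u_\infty)\ge P_0>0$, so $|\nabla u_\infty|\ge\delta>0$ on $U$. By continuity this bound persists up to $\partial U$, so $\partial U\cap\{\nabla u_\infty=0\}=\emptyset$ and hence $U=\R^N$. Then $\nabla P_\infty\equiv0$ on $\R^N$, and the Cauchy--Schwarz step of Step 1 becomes an equality. I expect this to force $D^2u_\infty\,\nabla H(\nabla u_\infty)$ to be parallel to $\nabla u_\infty$ along integral curves of $\nabla H(\nabla u_\infty)$. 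Along such a curve $\gamma$ with $\gamma'=\nabla H(\nabla u_\infty(\gamma))$, Euler's identity gives $\frac{d}{ds}u_\infty(\gamma)=H(\nabla u_\infty)\ge c_1\delta$. Hence $u_\infty\to\infty$ along $\gamma$, contradicting the boundedness of $u_\infty$. Therefore $P_0\le0$, which is the claimed bound.
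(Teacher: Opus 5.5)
Your outline is correct and matches the argument the paper relies on. The paper gives no proof of its own and simply quotes \cite{CoFaVa2}, where the bound is obtained by the same Modica/Caffarelli--Garofalo--Segala $P$-function scheme: an elliptic inequality for $P$ on $\{\nabla u\neq 0\}$, translation compactness so that the supremum is attained at a noncritical point, and the strong maximum principle. Two simplifications are available. The regularization fallback is unnecessary, since you only use the inequality where $\nabla u\neq 0$. In Step 3 no equality case is needed: once $H(\nabla u_\infty)\ge c>0$ on all of $\R^N$, Euler's identity along the flow of $\nabla H(\nabla u_\infty)$ already contradicts the boundedness of $u_\infty$.
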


\begin{rem}\label{remarketto}
    If the set where the gradient of $u$ vanishes is empty then follows that $c_u=\max \{F(\inf u),F(\sup u)\}$. Indeed, suppose by contradiction that $c_u=F(t)$, with $t\in (\inf u,\sup u)$. Then there exist $\hat x $ such that $u(\hat x)=t,$ and by previous Lemma $\Lambda_1(H(\nabla u(\hat x)))=0$, against the assumption that the set of the gradient of $u$ is empty.

From here we suppose, for simplicity, that 
\begin{equation}\label{zio}
    c_u=F(\inf u).
\end{equation}
\end{rem}

We now are ready to give the following result of classification


\begin{lem}\label{classificazioneode2}
    Let $u\in C^{1,\alpha}_{loc}(\R^N)\cap W^{1,\infty}(\R^N)$ be a stable weak solution of \eqref{equazione}. Suppose that $\{\nabla u=0\}=\emptyset$ and $\partial_{x_N} u\ge 0$. Let $\underline u$ as in \eqref{ubassousopra} and suppose that exist $\underline w\in S^{N-2}$ and $\underline h:\R\rightarrow \R$ such that $\underline u(x')= \underline h(x'\cdot \underline w),$ for any $x'\in \R^{N-1}.$ Then one of the following possibilities holds:
    \begin{itemize}
\item [(i)] $\underline h$ is constant,
\item [(ii)] $\{\underline h '=0\}=\emptyset$,
\item [(iii)] There exist $\beta\in \R$ such that $\underline h'(t)<0$ for $t<\beta$ and $\underline h(t)=\inf u$ for $t\ge \beta$,
\item [(iv)]There exist $\beta\in \R$ such that $\underline h'(t)>0$ for $t>\beta$ and $\underline h(t)=\inf u$ for $t\le \beta$,
\item [(v)] There exist $\beta_1$ and $\beta_2$ such that $\underline h'(t)<0$ for $t<\beta_1$, $\underline h'(t)>0$ for $t>\beta_2$ and $\underline h(t)=\inf u$ for $\beta_1\le t\le \beta_2$ .
    \end{itemize}
    
\end{lem}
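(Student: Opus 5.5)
The plan is to reduce the statement to the one-dimensional classification of Lemma \ref{classificazioneode} and then use the monotonicity $\partial_{x_N}u\ge 0$, together with the energy bound of the pointwise estimate and Remark \ref{remarketto}, to rule out or sharpen the cases.

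\textbf{Step 1: reduction to the ODE.} By Theorem \ref{analisiinfinito}, $\underline u$ is a weak solution of \eqref{equazione} in $\R^{N-1}$ with the restricted norm $\underline H(x')=H(x',0)$. Since $\underline u(x')=\underline h(x'\cdot\underline w)$, testing the weak formulation with functions of the form $\varphi(x'\cdot \underline w)\eta(x'')$, where $\eta$ is a cutoff in the directions orthogonal to $\underline w$ normalized to have integral one, shows that $\underline h$ is a weak solution of an ODE of type \eqref{ode}. Its one-dimensional Finsler norm is $t\mapsto \underline H(t\underline w)=c|t|$. Moreover $\underline h\in C^1\cap W^{1,\infty}$, since $\underline u$ is a $C^{1,\alpha'}_{loc}$ limit of translates with uniform bounds. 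Lemma \ref{basta2} and Lemma \ref{classificazioneode} then apply to $\underline h$.

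\textbf{Step 2: the key identity at critical points.} Because $\partial_{x_N}u\ge0$, we have $\underline u\le u\le\overline u$, so $\inf\underline h=\inf u$. By Lemma \ref{basta2}, $\Lambda_1(H(\underline h'))+F(\underline h)\equiv F(\inf\underline h)=F(\inf u)=c_u$, using \eqref{zio}.

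Now suppose $\underline h'(t_0)=0$ and set $s_0:=\underline h(t_0)$. Then $F(s_0)=c_u$. By continuity there are points $x$ with $u(x)$ arbitrarily close to $s_0$; in fact $s_0$ lies in the closure of the range of $u$. I want to show $s_0=\inf u$. If $s_0\in(\inf u,\sup u)$, choose $\hat x$ with $u(\hat x)=s_0$. The pointwise bound gives $\Lambda_1(H(\nabla u(\hat x)))\le c_u-F(s_0)=0$, hence $\nabla u(\hat x)=0$, which contradicts $\{\nabla u=0\}=\emptyset$. Since $s_0\ge\inf u$, it remains to exclude $s_0=\sup u>\inf u$. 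A translate of $u$ converges to $\underline u$, so $\underline u\le u$ everywhere. If $\underline h$ attained $\sup u$, then $u$ would also attain $\sup u$ at that point, making it an interior maximum. Then $\nabla u=0$ there, again a contradiction. Hence every critical value of $\underline h$ equals $\inf u$. This step is where I expect the real work; in particular the case $s_0=\sup u$ needs the strict comparison $\underline u\le u$ with touching, which I will check carefully.

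\textbf{Step 3: reading off the cases.} In cases (iii) and (iv) of Lemma \ref{classificazioneode}, the critical points $\beta,\beta_i$ have value $\inf u=\min\underline h$, so they are minima. Away from them $\underline h'\ne0$, which forces strict monotonicity with the signs that make $\beta$ a minimum. On an interval between two critical points, $\underline h$ would go from $\inf u$ to $\inf u$ while remaining strictly monotone, which is impossible. Therefore $\underline h\equiv\inf u$ there. Gluing the pieces, the critical set of $\underline h$ is a closed interval, possibly a single point or a half-line, on which $\underline h\equiv \inf u$. This yields exactly the alternatives (i)–(v). If the critical set is a half-line, for example $[\beta,\infty)$, we obtain (iii). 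The case $\underline h\equiv\inf u$ on all of $\R$ falls under (i).
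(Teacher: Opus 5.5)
Your proposal is correct and follows essentially the paper's route. You reduce to the ODE via product test functions, then use Lemma \ref{basta2} with \eqref{zio} and the pointwise bound of Remark \ref{remarketto} to force critical values into $\{\inf u,\sup u\}$, and exclude $\sup u$ exactly as in \eqref{dainelli}. The only difference is packaging: you first show every critical value of $\underline h$ equals $\inf u$ and then deduce that the critical set is a closed interval, whereas the paper eliminates case (iii) of Lemma \ref{classificazioneode} and then analyzes case (iv) directly; your version is slightly cleaner.
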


\begin{proof}
    We note that from Theorem \ref{analisiinfinito}, $\underline u$ is a weak solution of \eqref{equazione} in $\R^{N-1}.$ Now we will prove, since $\underline h(t)=\underline u(\underline wt)$, that $\underline h$ satisfies 
    \begin{equation}
        \left(B'(\tilde H(\underline h'(t)))\tilde H'(\underline h'(t))\right)'+f(\underline h(t))=0
    \end{equation}
    where we set $\tilde H(t)=\underline H(t\underline w).$

    Indeed, we consider the following change of variables given by $(t,y')=\psi ^{-1}(x_1,...,x_{N-1})=(\langle t,\underline w\rangle,\langle t,w^{1,\bot}\rangle,...,\langle t,w^{N-2,\bot}\rangle)$, where $\underline w,w^{1,\bot},...,w^{N-2,\bot}$ is a basis of $\R^{N-1}.$ Since $\underline u$ is a weak solution of \eqref{equazione}, we obtain 
  \begin{equation}\label{formulazionedebole247}
  \begin{split}
    &\int_{\R^{N-1}} B'(\underline H(\nabla \underline u))\langle \nabla \underline H(\nabla \underline u),\nabla \varphi_1\rangle \,dx' =\int_{\R^{N-1}}f(\underline u)\varphi_1\,dx' \\
    &  =\int_{\R^{N-1}} B'(\tilde H(\underline h'(t)))\langle \nabla \underline H(\underline h'(t)w),\nabla \varphi_1(\psi (t,y'))\rangle \,dt'\,dy' =\int_{\R^{N-1}}f(\underline h(t))\varphi_1(\psi (t,y'))\,dt'\,dy', 
    \end{split}
\end{equation}
 for any $\varphi_1\in C^1_c(\R^{N-1})$.
 
 Now we set $\tilde \varphi (t,y')= \varphi_1(\psi (t,y'))$, hence $\nabla \tilde \varphi (t,y')=D\psi(t,y')\nabla \varphi_1(\psi (t,y')),$ where $D\psi(t,y')$ denotes the Jacobian matrix of the function $\psi$. Therefore \eqref{formulazionedebole247} becomes 
 \begin{equation}\label{dani}
     \begin{split}
       &\int_{\R^{N-1}} B'(\tilde H(h'(t)))\langle \nabla \underline H(\underline h'(t)\underline w),\nabla \varphi_1(\psi (t,y'))\rangle \,dt'\,dy' =\int_{\R^{N-1}}f(\underline h(t))\varphi_1(\psi (t,y'))\,dt'\,dy' \\ 
      & =\int_{\R^{N-1}} B'(\tilde H(\underline h'(t)))\langle \nabla \underline H(\underline h'(t)\underline w),(D\psi(t,y'))^{-1}\nabla \tilde \varphi (t,y')\rangle \,dt'\,dy' \\ & =\int_{\R^{N-1}}f(\underline h(t))\tilde \varphi (t,y')\,dt'\,dy' 
      \\& = \int_{\R^{N-1}} B'(\tilde H(\underline h'(t)))\langle \nabla \underline H(\underline h'(t)\underline w),\underline w \rangle  \partial_t \tilde\varphi (t,y')\,dt'\,dy'\\+
      & \int_{\R^{N-1}} B'(\tilde H(\underline h'(t)))\langle \nabla \underline H(\underline h'(t)\underline w),(\hat D \psi(t,y'))^{-1}\nabla_{y'} \tilde \varphi (t,y')\rangle \,dt'\,dy'=\int_{\R^{N-1}}f(\underline h(t))\tilde \varphi(t,y')\,dt'\,dy',
     \end{split}
 \end{equation}
where we set $(\hat D \psi(t,y'))^{-1})_{ij}:=(D \psi(t,y'))^{-1})_{ij}$, for $i=1,...,N-1$, $j=2,...,N-1$ and $\nabla _{y'}$ denotes the gradient of the variables $y'$. Now we consider $\tilde \varphi (t,y')= \tilde \varphi_1(t)\tilde \varphi_2(y')$, with $\tilde \varphi_1\in C^1_c(\R)$ and $\tilde \varphi_2 \in C^1_c(\R^{N-2})$, satisfying 
\begin{equation}\label{cazzi}
    \int_{\R^{N-2}}\tilde \varphi_2(y')\,dy'=1 \qquad \text{and}\qquad  \int_{\R^{N-2}}\nabla \tilde \varphi_2(y')\,dy'=(0,\cdots,0).
\end{equation}
Note that the last property follows from the fact that $\tilde{\varphi}_2$ is compactly supported in $\R^{N-2}$. Since $(\hat{D} \psi(t,y'))^{-1}$ is constant, by \eqref{dani} and \eqref{cazzi} we have that 
    \begin{equation}
        \int_\R B'(\tilde H(\underline h'(t))) \tilde H'(\underline h'(t)))\tilde \varphi_1'(t) \,dt=\int_\R f(\underline h(t))\tilde \varphi_1(t)\,dt
    \end{equation}
    for all $\tilde \varphi_1\in C^1_c(\R).$

Additionally, we have: 
\begin{equation}\label{dainelli}
    \underline h(t)< \sup u
\end{equation}
for any $t\in \R$ because if $\underline h(t)=\sup u$ for some $t$, since $\partial _{x_N} u\ge 0$, then 
$$\sup u=\underline h(t)=\underline u(\underline w t)=\lim_{s\rightarrow -\infty} u(\underline wt,s)\le u(\underline w t,0)\le \sup u,$$
so $$\sup u=u(\underline wt,0),$$ which implies $\nabla u(\underline wt,0)=0$, contradicting the fact that $\{\nabla u=0\}=\emptyset.$ Next, we use the classification from Lemma \ref{classificazioneode}: if $\underline h$ satisfies conditions $(i)$ or $(ii)$ of the Lemma \ref{classificazioneode}, we fall into cases $(i)$ or $(ii)$, and we are done. Now, we show that case $(iii)$ of Lemma \ref{classificazioneode} is impossible in this situation. Indeed, if case $(iii)$ were to hold, then by the fact that $\partial_{x_N} u\ge 0$ and $\{\nabla u =0\}=\emptyset$, we have $$F(\underline h(\beta_1))=F(\underline h(\beta_2))=F(\inf \underline h)=F(\inf \underline u)=F(\inf u).$$ 
By \eqref{zio} we obtain $$F(\underline h(\beta_1))=F(\underline h(\beta_2))=c_u,$$ so from Remark \ref{remarketto} we deduce that $$\underline h(\beta_1),\underline h(\beta_2)\in \{\inf u,\sup u\}.$$ By \eqref{dainelli}, we get that $\underline h(\beta_1)=\underline h(\beta_2)=\inf u$, which contradicts the fact that $\underline h$ is strictly monotone in $(\beta_1,\beta_2)$.

Thus, the only remaining possibility is that $\underline h$ satisfies condition $(iv)$ of Lemma \ref{classificazioneode}. By changing $t$ to $-t$ if necessary, we can consider the case where the interval in $(iv)$ of Lemma \ref{classificazioneode} is of the form $(-\infty,\beta).$ By the fact that $\partial_{x_N} u\ge 0$, $\{\nabla u =0\}=\emptyset$, and by \eqref{zio} we have $$F(\underline h(\beta))=F(\inf \underline u)=F(\inf u)=c_u.$$ Thus by Remark \ref{remarketto} and \eqref{dainelli} we get $\underline h(\beta)=\inf u.$  Now, if $\underline h(t)=\inf u$ for $t\ge \beta$, then we are in case $(iii)$. Thus, we can assume there exists some $t^*>\beta$ such that $\underline h(t^*)>\inf u$. This implies there must be some $t'>\beta$ where $h'(t')> 0$. Consequently, there exists an interval $(\beta_2, \beta_3)$, as large as possible, with $\beta_2 \ge \beta=:\beta_1$ and $\beta_3\in (\beta_2,+\infty)\cup \{+\infty\}$ such that $h'(t)>0$ for any $t \in (\beta_2, \beta_3)$.

If $\beta_3 \neq +\infty$ then $h'(\beta_2)=h'(\beta_3)=0$, and by \eqref{remarkino}, we would revert to case $(iii)$ of Lemma \ref{classificazioneode}, which we have already shown to be impossible.

Therefore, $\beta_3=+\infty$. Similarly, $\underline h(t)$ must equal to $\inf u$ in $[\beta_1, \beta_2]$, because if not, there would exist an interval $(\beta_1', \beta_2') \subset [\beta_1, \beta_2]$ such that $\underline h'(t)\neq 0$ in $(\beta'_1,\beta'_2)$ and $\underline h'(\beta'_1)=\underline h'(\beta'_2)=0$, leading us back to the impossible case $(iii)$. This shows that $\underline h'(t)\neq 0$, for $t<\beta_1$ and $t>\beta_2$ and $\underline h(t)=\inf u$ for $t\in [\beta_1,\beta_2]$ resulting in case $(v)$.

\end{proof}

Given $v: \R^N\rightarrow \R,$ with $|\nabla v|\in L^{\infty}(\R^N)$, we consider, for $R>0$, the energy 

$$ E_R(v):=\int_{B_R} \Lambda _2(H(\nabla v))-F(v).$$ 

We give the following result that controls the energy of a solution $v$ with the one $v^t$, up to a term of order $R^{N-1}$.

\begin{lem}\label{energia}
Let $u\in C^2(\R^N)\cap W^{1,\infty}(\R^N)$ be a weak solution of \eqref{equazione}, with $\partial_{x_N} u\ge 0$ and $\{\nabla u=0\}=\emptyset.$ Then exists $C>0$, depending only on $N, u, H$ and $B$ such that 
\begin{equation}
    E_R(u)\le E_R(u^t)+CR^{N-1}
\end{equation}
for any $t\in \R.$
\end{lem}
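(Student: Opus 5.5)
The plan is to follow the classical argument of Ambrosio--Cabré (and its $p$-Laplacian version in \cite{FSV}): differentiate the energy of the translated function $u^t$ with respect to $t$ and show that the derivative is bounded by a boundary term only.

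\textbf{Step 1 (derivative in $t$).} For fixed $R$, set $e(t):=E_R(u^t)$. Since $u\in C^2$ and $\nabla u$ is bounded away from zero on compact sets (because $\{\nabla u=0\}=\emptyset$), differentiation under the integral gives
\[
e'(t)=\int_{B_R} \Lambda_2'(H(\nabla u^t))\langle \nabla H(\nabla u^t),\nabla \partial_t u^t\rangle - f(u^t)\partial_t u^t\,dx .
\]
Here $\Lambda_2'(s)=\lambda_2(s)s=B'(s)$ and $\partial_t u^t=\partial_{x_N}u^t$.

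\textbf{Step 2 (boundary term).} Integrate by parts on $B_R$ and use that $u^t$ solves \eqref{equazione}; this is legitimate in the classical sense since $u\in C^2$. The interior terms cancel, leaving
\[
e'(t)=\int_{\partial B_R} B'(H(\nabla u^t))\langle \nabla H(\nabla u^t),\nu\rangle\, \partial_{x_N}u^t\, d\sigma .
\]
Now bound it. By \eqref{grad00}, \eqref{H equiv euclidea} and $(h_B)$-(iii), $|B'(H(\nabla u^t))\nabla H(\nabla u^t)|\le C$, with $C$ depending on $\|\nabla u\|_\infty$, $H$ and $B$. Since $\partial_{x_N}u^t\ge0$, we get
\[
|e'(t)|\le -e'(t)\ \text{or}\ e'(t)\ge -C\int_{\partial B_R}\partial_{x_N}u^t\,d\sigma ,
\]
that is, $e'(t)\ge -C\int_{\partial B_R}\partial_{x_N}u^t$.

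\textbf{Step 3 (integrate in $t$).} For $t>0$,
\[
E_R(u)-E_R(u^t)=-\int_0^t e'(s)\,ds\le C\int_{\partial B_R}\int_0^t \partial_{x_N}u(x',x_N+s)\,ds\,d\sigma .
\]
The inner integral equals $u^t-u\le \sup u-\inf u\le 2\|u\|_\infty$, and $|\partial B_R|=cR^{N-1}$. This gives $E_R(u)\le E_R(u^t)+CR^{N-1}$ uniformly in $t\ge0$.

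For $t<0$ the same argument runs with the sign reversed. Integrating from $t$ to $0$ gives $E_R(u)-E_R(u^t)=\int_t^0 e'(s)\,ds$, and now one needs the upper bound $e'\le C\int\partial_{x_N}u^s$. This also follows from the pointwise bound $|\text{integrand}|\le C\partial_{x_N}u^s$, valid because of the sign of $\partial_{x_N}u$. Again the $s$-integral is bounded by the oscillation of $u$.

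\textbf{Main obstacle.} The delicate point is justifying Step 1. Only $u\in W^{1,\infty}$ is assumed, so boundedness of $u$ itself must be ensured; it holds on bounded sets, and the oscillation of $u$ along vertical segments is controlled by the limits $\overline u,\underline u$ from Theorem~\ref{analisiinfinito}, or $u$ is bounded as implicitly assumed there. One also needs $\Lambda_2\circ H$ to be $C^1$ near the gradient values, which holds because $\nabla u\neq0$. If uniform boundedness of $u$ fails, I would instead use $\int_0^t\partial_{x_N}u^s\,ds=u^t-u$ together with the monotone limits $\overline u,\underline u$ to bound it by $\overline u-\underline u\le C$.
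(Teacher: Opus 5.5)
Your proposal is correct and follows the paper's proof essentially step by step. You differentiate $E_R(u^t)$ in $t$ and integrate by parts classically (using $u\in C^2$ and $\nabla u\neq0$), leaving the boundary term $\int_{\partial B_R}B'(H(\nabla u^t))\langle\nabla H(\nabla u^t),\eta\rangle\,\partial_{x_N}u^t\,d\sigma$. You bound it by $C\int_{\partial B_R}\partial_{x_N}u^t\,d\sigma$ using $\partial_{x_N}u\ge0$, and integrate in $t$ to get at most $2C\|u\|_{L^\infty(\R^N)}|\partial B_R|$. Your explicit treatment of $t<0$ via the two-sided bound $|e'(s)|\le C\int_{\partial B_R}\partial_{x_N}u^s\,d\sigma$ is slightly more careful than the paper, whose displayed chain is written as if $t\ge0$.

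Two minor points. The boundedness worry in your last paragraph is unnecessary, since $u\in W^{1,\infty}(\R^N)$ already means $u\in L^\infty(\R^N)$. The garbled line ``$|e'(t)|\le -e'(t)$ or \dots'' in Step 2 should simply read $|e'(t)|\le C\int_{\partial B_R}\partial_{x_N}u^t\,d\sigma$.
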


\begin{proof}
    Since $u\in C^2(\R^N),$ we have that $u$, therefore $u^t$, is a strong solution of \eqref{equazione}. Integrating by parts we have
\begin{equation}\label{attaccabro}
    \begin{split}
        \partial_t E_R(u^t)&=\int_{B_R} B'(H(\nabla u^t))\langle \nabla H(\nabla u^t),\partial_t(\nabla u^t)\rangle - f(u^t)\partial_t u^t \\ &=\int_{B_R} B'(H(\nabla u^t))\langle \nabla H(\nabla u^t),\partial_t(\nabla u^t)\rangle +\operatorname{div}\left(B'(H(\nabla u^t))\nabla H (\nabla u^t)\right)\partial_t u^t \\ &=\int_{\partial B_R} B'(H(\nabla u^t))\partial_t u^t\langle\nabla H(\nabla u^t),\eta\rangle d\sigma,
    \end{split}
\end{equation}
    where $\eta$ is the exterior normal of $B_R$.

Since $\nabla H$ is a $0-$homogeneous function and $B'(t)\in L^{\infty}_{loc}([0,+\infty))$, by \eqref{attaccabro}, and since $\partial_{x_N}u\ge 0$, we get
\begin{equation}
\begin{split}
    E_R(u^t)-E_R(u)&=\int_0^t \partial _sE_R(u^s)ds \ge -C \int_{\partial B_R}\int_0^t \partial _su^sd\sigma ds\\&=-\tilde C\int_{\partial B_r} (u^t-u)d\sigma \ge -2\tilde C\|u\|_{L^{\infty}(\R^N)}|\partial B_r|\\ &\ge -CR^{N-1},
    \end{split}
\end{equation}
where $C$ is a positive constant depending on $B$, $H$, $N$ and $u$.
\end{proof}

\begin{lem}\label{energy}
Let $u\in C^{1,\alpha}_{loc}(\R^N)\cap W^{1,\infty}(\R^N)$ be a stable weak solution of \eqref{equazione}. Suppose that $\{\nabla u=0\}=\emptyset$ and $\partial_{x_N} u\ge 0$. Let $\underline u$ has one-dimensional symmetry. Then 
\begin{equation}
    \int_{B_R}\Lambda_2(H(\nabla u))-F(u)+c_u dx \le CR^{N-1}
\end{equation}
where $C$ is a positive constant depending on $B$, $H$, $N$ and $u$
\end{lem}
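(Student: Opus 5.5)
Following the Ambrosio--Cabré / Farina--Sciunzi--Valdinoci strategy, I would compare $u$ with its translates $u^t$ and let $t\to-\infty$. Lemma \ref{energia} gives
\[
E_R(u)\le E_R(u^t)+CR^{N-1}\qquad\text{for all } t\in\R .
\]
Since $c_u$ is a constant, adding $c_u|B_R|$ to both sides shows that it suffices to bound $\int_{B_R}\Lambda_2(H(\nabla u^t))-F(u^t)+c_u$ by $CR^{N-1}$ uniformly in $t$, or at least in the limit $t\to-\infty$. (Lemma \ref{energia} needs $u\in C^2$; since $\{\nabla u=0\}=\emptyset$, this holds by the regularity recalled in the introduction.)

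I would then pass to the limit as $t\to-\infty$. By the $C^{1,\alpha'}_{loc}$ convergence $u^t\to\underline u$ established in the proof of Theorem \ref{analisiinfinito}, together with boundedness of the integrand and dominated convergence on $B_R$, we get
\[
\int_{B_R}\Lambda_2(H(\nabla u))-F(u)+c_u\le \int_{B_R}\Lambda_2(\underline H(\nabla\underline u))-F(\underline u)+c_u\,dx+CR^{N-1}.
\]
Here $\underline u$ depends only on $x'$, so the right-hand integral is at most $2R$ times the corresponding $(N-1)$-dimensional integral over $B'_R\subset\R^{N-1}$. It therefore suffices to show that
\[
\int_{B'_R}\Lambda_2(\underline H(\nabla\underline u))-F(\underline u)+c_u\,dx'\le CR^{N-2}.
\]
Since $\underline u(x')=\underline h(x'\cdot\underline w)$, a rotation reduces this to showing that
\[
\int_{\R}\Lambda_2(\tilde H(\underline h'(s)))-F(\underline h(s))+c_u\,ds<\infty,
\]
because the slab then contributes at most $CR^{N-2}$.

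This one-dimensional finiteness is the main obstacle, and I would handle it with the classification of Lemma \ref{classificazioneode2}. On any interval where $\underline h\equiv\inf u$, we have $\underline h'=0$ and, by \eqref{zio}, $F(\inf u)=c_u$, so the integrand vanishes. In cases (iii)--(v) the remaining pieces are half-lines on which $\underline h$ is strictly monotone. On each such piece Lemma \ref{basta2} gives the conservation law $\Lambda_1(\tilde H(\underline h'))+F(\underline h)=c_u$. Case (i) is trivial. In case (ii), if $\underline h$ is monotone and bounded, the same identity holds with constant $F(\inf\underline h)=F(\sup\underline h)$; using Remark \ref{remarketto} and $\underline h<\sup u$, I would check that this constant equals $c_u$.

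Consequently the integrand equals $\Lambda_2(\tilde H(\underline h'))+\Lambda_1(\tilde H(\underline h'))$, which is comparable to $B'(\tilde H(\underline h'))\,\tilde H(\underline h')$ and hence to $|\underline h'|\cdot|\underline h'|$ up to bounded factors, because $\underline h'$ is bounded. On each monotone half-line this is bounded by
\[
C\int|\underline h'|\,ds\le C\,\operatorname{osc}\underline h\le 2C\|u\|_\infty<\infty.
\]
This completes the argument. The delicate points are matching the ODE constant to $c_u$ via the sign considerations, and justifying the limit of the energy, where boundedness of $\nabla u$ and uniform $C^1$ convergence on compact sets suffice.
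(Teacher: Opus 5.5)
Your proposal is correct and follows essentially the same route as the paper: Lemma~\ref{energia}, then passage to the limit profile $\underline u$, then reduction to finiteness of the one-dimensional energy of $\underline h$. That finiteness comes from the first integral of Lemma~\ref{basta2}, the classification of Lemma~\ref{classificazioneode2}, and $\int|\underline h'|\le \operatorname{osc}\underline h$; you also correctly let $t\to-\infty$, whereas the paper's final display writes $t\to+\infty$ even though $\underline u$ is the limit at $-\infty$. The differences are cosmetic: you use the identity $\Lambda_1(t)+\Lambda_2(t)=tB'(t)$ instead of the paper's $\Lambda_1\le\hat C\Lambda_2$, and the estimate you actually need is $B'(\tilde H(\underline h'))\tilde H(\underline h')\le C|\underline h'|$, which follows from boundedness of $B'$ on compacts; do not state it as comparability with $|\underline h'|^2$, since that fails for $p<2$, $\kappa=0$.
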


\begin{proof}
We suppose that there exist $\underline w\in S^{N-2}$ and $\underline h:\R\rightarrow \R$ such that $\underline u(x')= \underline h(x'\cdot w),$ for any $x'\in \R^{N-1}.$ Recalling \eqref{ubassousopra}, we set $\tilde H(t):=\underline H(t w):=H(tw,0).$

    First, we will show that  
    \begin{equation}\label{miao}
        \int_{\R} \Lambda_2(\tilde H(\underline h'(t)))-F(\underline h(t))+c_u dt <+\infty.
    \end{equation}
Indeed, since $\partial_{x_N}u\ge 0$, we remark that $c_u=F(\inf u)=F(\inf \underline h)$. By Lemma \ref{basta2}, by $(h_B)$-$(iii)$ and by \eqref{zio}, we get 

\begin{equation}
\begin{split}
    &\int_{-\infty}^{+\infty}c_u-F(\underline h(t)) \,dt =\int_{-\infty}^{+\infty}\Lambda_1(\tilde H(\underline h'(t)))\,dt \\&=\int_{-\infty}^{+\infty}\int_0^{\tilde H(\underline h'(t))} B''(s)s \,ds\,dt \le \hat C\int_{-\infty}^{+\infty}\int_0^{\tilde H(\underline h'(t))} B'(s) \,ds\,dt \\&=\hat C\int_{-\infty}^{+\infty}\Lambda_2(\tilde H(\underline h'(t))) \,dt,
    \end{split}
\end{equation}

where $\hat C$ is a positive constant depending on $B$. Now, \eqref{miao} is proved, if 
\begin{equation}\label{serrin}
    \int_{-\infty}^{+\infty}\Lambda_2(\tilde H(\underline h'(t))) \,dt< +\infty.
\end{equation}

To prove this, we consider different cases in Lemma \ref{classificazioneode2}. We consider case (v) in Lemma \ref{classificazioneode2}, the other cases are similar. Since $\underline h\in W^{1,\infty}(\R)$ and $B'(t)\in L^{\infty}_{loc}[0,+\infty)$, using the fact that $H$ is a norm equivalent to Euclidean one (see \eqref{H equiv euclidea}) we get 
\begin{equation}
\begin{split}
&\int_{-\infty}^{+\infty}\Lambda_2(\tilde H(\underline h'(t))) \,dt=\int_{-\infty}^{+\infty}  \int_0^{\tilde H(\underline h'(t))}B'(s)\,ds \\ & \le \|B'\|_{L^{\infty}([0,\tilde H(\underline h'(t))])}\alpha_2\int_{-\infty}^{+\infty}|\underline h'(t)|\,dt \\ & =\|B'\|_{L^{\infty}([0,\tilde H(\underline h'(t))])}\alpha_2\left(\lim_{\alpha\rightarrow-\infty}\int_{\alpha}^{\beta_1}-h'(t)+\lim_{\beta\rightarrow\infty}\int_{\beta_2}^{\beta}h'(t)\right)\\&=\|B'\|_{L^{\infty}([0,\tilde H(\underline h'(t))])}\alpha_2\left(\lim_{\alpha\rightarrow-\infty}h(\alpha)-h(\beta_1)+\lim_{\beta\rightarrow\infty} h(\beta)-h(\beta_2)\right) \\&\le 4\|B'\|_{L^{\infty}([0,\tilde H(\underline h'(t))])}\alpha_2\|u\|_{L^{\infty}(\R^N)}<+\infty,
\end{split}
\end{equation}
and \eqref{miao} is done.

Therefore 
\begin{equation}\label{cosa}
\begin{split}
&\int_{B_R}\Lambda_2(\underline H(\nabla \underline u))-F(\underline u)+c_u dx \\&= \int_{B_R} \Lambda_2(\tilde H(\underline h'(t)))-F(h(t))+c_u dx'' dt dx_N \le \tilde CR^{N-1},
   \end{split}
\end{equation}
where $\tilde C$ is a positive constant depending on $B,H,$ and $u$. 
By Lemma \ref{energia} and \eqref{cosa} we get
\begin{equation}
    \begin{split}
        &\int_{B_R}\Lambda_2(H(\nabla u))-F(u)+c_u dx =E_R(u)+c_u|B_R| \\ &\le \lim_{t\rightarrow +\infty} E_R(u^t)+c_u|B_R|+ CR^{N-1} \\& \le \int_{B_R}\Lambda_2(\underline H(\nabla \underline u))-F(\underline u)+c_u dx+CR^{N-1} \le CR^{N-1},
    \end{split}
\end{equation}
where $C$ is a positive constant depending on $B,H,N$ and $u.$

   \end{proof}

\section{Proof of Theorem \ref{DegiorgiN=3}}\label{dimostrazioneteorema}

We start this section, showing that the weak solutions of \eqref{equazione} that are monotone in one variable are stable.

\begin{lem}\label{monotoniaimplicastabilità}
    Let $u\in C^{1}(\R^N)$, be a weak solution of \eqref{equazione}. Suppose $\partial_{x_N}u\ge 0$. Then \begin{equation*}
    \int_{\R^N}\langle A(\nabla u)\nabla \varphi,\nabla \varphi\rangle-\int_{\{\partial _{x_N} u>0\}}f'(u)
    \varphi^2 \ge 0,
\end{equation*}
for any $\varphi\in C^1_c(\R^N\setminus\{\nabla u=0\})$.

In particular, if $u\in C^{1}(\R^N)$ is a weak solution of \eqref{equazione}, with  $\partial_{x_N}u> 0$, then $u$ is stable.
\end{lem}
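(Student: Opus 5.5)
The plan is the classical Picone-type argument, with $v:=\partial_{x_N}u\ge 0$ as the positive supersolution of the linearized operator. By Lemma \ref{Lemma1} (with $i=N$), $v$ satisfies the linearized equation \eqref{u_irisolvelinearizzato}:
\[
\int \langle A(\nabla u)\nabla v,\nabla\psi\rangle=\int f'(u)v\psi
\]
for all admissible $\psi$. The computations are carried out on $\{\nabla u\neq0\}$, where $u\in C^2$. This is enough, since the test functions are taken in $C^1_c(\R^N\setminus\{\nabla u=0\})$.

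Fix $\varphi\in C^1_c(\R^N\setminus\{\nabla u=0\})$ and $\epsilon>0$. I take the test function $\psi_\epsilon:=\varphi^2/(v+\epsilon)$, which is legitimate because $v+\epsilon\ge\epsilon>0$ and $v\in C^1$ near the support of $\varphi$. Its gradient is
\[
\nabla\psi_\epsilon=\frac{2\varphi\nabla\varphi}{v+\epsilon}-\frac{\varphi^2\nabla v}{(v+\epsilon)^2}.
\]
Plugging this in gives
\[
\int f'(u)\frac{v}{v+\epsilon}\varphi^2=\int \frac{2\varphi}{v+\epsilon}\langle A\nabla v,\nabla\varphi\rangle-\frac{\varphi^2}{(v+\epsilon)^2}\langle A\nabla v,\nabla v\rangle .
\]
Since $A(\nabla u)$ is symmetric positive definite on the support (Lemma \ref{matricedefinitapositiva}), the Cauchy--Schwarz inequality $2\langle A a,b\rangle\le\langle Aa,a\rangle+\langle Ab,b\rangle$ applies with $a=\varphi\nabla v/(v+\epsilon)$ and $b=\nabla\varphi$. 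Hence the right-hand side is at most $\int\langle A(\nabla u)\nabla\varphi,\nabla\varphi\rangle$, which yields
\[
\int f'(u)\frac{v}{v+\epsilon}\varphi^2\le\int\langle A(\nabla u)\nabla\varphi,\nabla\varphi\rangle .
\]

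Next I let $\epsilon\to0$. The factor $v/(v+\epsilon)$ lies in $[0,1]$ and converges to $\chi_{\{v>0\}}$. Moreover $f'(u)$ is bounded on the support of $\varphi$: $u$ is continuous, and $f\in C^{1,\beta}_{loc}$ is used on the range of $u$. Dominated convergence therefore gives
\[
\int_{\{\partial_{x_N}u>0\}}f'(u)\varphi^2\le\int_{\R^N}\langle A(\nabla u)\nabla\varphi,\nabla\varphi\rangle,
\]
which is the claim.

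For the "in particular": if $\partial_{x_N}u>0$ everywhere, then $\{\nabla u=0\}=\emptyset$ and $\{\partial_{x_N}u>0\}=\R^N$. The inequality then holds for every $\varphi\in C^1_c(\R^N)$, and this is exactly \eqref{stabilità}.

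The main obstacle is justifying the use of $\psi_\epsilon$ in \eqref{u_irisolvelinearizzato}. That formulation holds for $C^1_c$ test functions, whereas $\psi_\epsilon$ is only $C^1$ with compact support, since $v$ is $C^1$ only away from $Z_u$. This is not an issue here, because the support of $\varphi$ avoids $Z_u$ and $u\in C^2$ there. Still, I would either mollify $\psi_\epsilon$ or note that \eqref{u_irisolvelinearizzato} extends by density to $W^{1,2}$ test functions with compact support in $\{\nabla u\neq0\}$. On that set all coefficients are locally bounded, so the density argument is routine.
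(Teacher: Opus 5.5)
Your proposal is correct and follows the paper's proof. The paper also tests the linearized equation \eqref{u_irisolvelinearizzato} with $\varphi^2/(u_N+\delta)$ and disposes of the cross term by rewriting it as $-\langle A(\nabla u)w,w\rangle+\langle A(\nabla u)\nabla\varphi,\nabla\varphi\rangle$ with $w=\nabla\varphi-\frac{\varphi}{u_N+\delta}\nabla u_N$, which is exactly your Cauchy--Schwarz step for the $A$-inner product. Your explicit dominated-convergence argument for $\epsilon\to0$ makes precise what the paper only states as ``for $\delta$ sufficiently small''; also, since $u\in C^2$ near $\operatorname{supp}\varphi$, your $\psi_\epsilon$ already lies in $C^1_c(\R^N\setminus Z_u)$, so no mollification or density argument is needed.
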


\begin{proof}
     Fixed $\delta >0$, let us consider \begin{equation}\label{daniel}
    \psi:=\frac{\varphi^2}{u_N+\delta}
     \end{equation}
for $\varphi\in C^1_c(\R^N\setminus\{\nabla u=0\}).$ Since $\partial_{x_N}u\ge 0$, $\varphi\in C^1_c(\R^N\setminus\{\nabla u=0\})$ and using \eqref{daniel} in \eqref{u_irisolvelinearizzato} we get

\begin{equation}\label{ii}
        \begin{split}
        &\int_{\R^N} B''(H(\nabla u))\langle \nabla H(\nabla u),\nabla u_N\rangle\langle \nabla H(\nabla u),\nabla \varphi\rangle\frac{2\varphi}{u_N+\delta} \\ &- B''(H(\nabla u))\langle \nabla H(\nabla u),\nabla u_N\rangle\langle \nabla H(\nabla u),\nabla u_N\rangle\frac{\varphi^2}{(u_N+\delta)^2}  \\&+ B'(H(\nabla u)) \langle D^2H(\nabla u)\nabla u_N,\nabla \varphi\rangle\frac{2\varphi}{u_N+\delta}\\ &-B'(H(\nabla u)) \langle D^2H(\nabla u)\nabla u_N,\nabla u_N\rangle\frac{\varphi^2}{(u_N+\delta)^2}  =\int_{\R^N} f'(u)u_N\frac{\varphi^2}{u_N+\delta} .
    \end{split}
    \end{equation}

Recalling the definition of the matrix $A$ (see \eqref{matrice}), we deduce 

\begin{equation}\label{iii}
        \begin{split}
    0=&\int_{\R^N} B''(H(\nabla u))\langle \nabla H(\nabla u),\nabla u_N\rangle\langle \nabla H(\nabla u),\nabla \varphi\rangle\frac{2\varphi}{u_N+\delta} \\ &- B''(H(\nabla u))\langle \nabla H(\nabla u),\nabla u_N\rangle\langle \nabla H(\nabla u),\nabla u_N\rangle\frac{\varphi^2}{(u_N+\delta)^2}  \\&+ B'(H(\nabla u)) \langle D^2H(\nabla u)\nabla u_N,\nabla \varphi\rangle\frac{2\varphi}{u_N+\delta}\\ &-B'(H(\nabla u)) \langle D^2H(\nabla u)\nabla u_N,\nabla u_N\rangle\frac{\varphi^2}{(u_N+\delta)^2}  -f'(u)u_N\frac{\varphi^2}{u_N+\delta} \\ &=\int_{\R^N}-\langle A(\nabla u)\left(\nabla \varphi-\frac{\varphi}{u_N+\delta}\nabla u_N\right),\left(\nabla \varphi\frac{\varphi}{u_N+\delta}\nabla u_N\right)\rangle\\ &\qquad +\langle A(\nabla u)\nabla \varphi,\nabla \varphi\rangle-f'(u)u_N\frac{\varphi^2}{u_N+\delta}. 
    \end{split}
    \end{equation}

Since $A$ is definite positive by Lemma \ref{matricedefinitapositiva}, for $\delta$ sufficiently small we obtain
\begin{equation}
    \int_{\R^N}\langle A(\nabla u)\nabla \varphi,\nabla \varphi\rangle-\int_{\{u_N>0\}}f'(u)
    \varphi^2 \ge 0,
\end{equation}

and therefore the thesis.
 

\end{proof}

Given a function $v\in C^{1,\alpha}_{loc}(\R^N)$, let us consider the graph of a function $Y(x):=(x,v(x))\in \R^N\times \R.$ The following result is useful for the proof of Theorem \ref{DegiorgiN=3}. 

\begin{lem}\label{lemmaaaa}
Suppose $v\in C^{1,\alpha}_{loc}(\R^N)$ satisfies 
\begin{equation}\label{proprieta}
    \int_{|Y(x)|\le r} B'(H(\nabla v))H(\nabla v) dx \le Cr^2
\end{equation}
for $r$ sufficiently large and $C$ positive constant. Then we have 
\begin{equation}
    \int_{\sqrt{r}\le |Y|\le r} \frac{B'(H(\nabla v))H(\nabla v)}{|Y|^2}\le C \ln r
\end{equation}
for $r$ sufficiently large and $C>0$.
\end{lem}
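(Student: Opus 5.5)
This is a standard layer-cake (Stieltjes integration by parts) argument applied to the distribution function
\[
\mu(s):=\int_{|Y(x)|\le s} B'(H(\nabla v))H(\nabla v)\,dx .
\]
Since $B'\ge 0$ and $H\ge 0$, the integrand $g:=B'(H(\nabla v))H(\nabla v)$ is nonnegative and continuous. Hence $\mu$ is nondecreasing and finite: the set $\{|Y|\le s\}$ is contained in the ball $\{|x|\le s\}$, which is bounded, and $g$ is bounded there. By hypothesis \eqref{proprieta} there is $r_0$ with $\mu(s)\le Cs^2$ for all $s\ge r_0$.

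The key step is to rewrite the weighted integral via the coarea/layer-cake formula:
\[
\int_{\sqrt r\le |Y|\le r}\frac{g}{|Y|^2}\,dx=\int_{[\sqrt r,r]}\frac{d\mu(s)}{s^2}.
\]
This is the Lebesgue--Stieltjes integral of $s^{-2}$ against $\mu$, with the measure understood as including possible jumps of $\mu$ at $s=\sqrt r$.

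Next, integrate by parts:
\[
\int_{[\sqrt r,r]}\frac{d\mu(s)}{s^2}\le\frac{\mu(r)}{r^2}+2\int_{\sqrt r}^{r}\frac{\mu(s)}{s^3}\,ds .
\]
The boundary term at $\sqrt r$ comes with a negative sign, so we drop it. Equivalently, by Fubini one can write $s^{-2}=r^{-2}+\int_s^r 2\tau^{-3}\,d\tau$ and exchange the order of integration, which gives the same inequality and avoids any discussion of jumps.

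Now insert the bound $\mu(s)\le Cs^2$, valid once $\sqrt r\ge r_0$, i.e.\ $r\ge r_0^2$. The first term is at most $C$, and the second is at most $2C\int_{\sqrt r}^r s^{-1}\,ds=C\ln r$. Hence the total is at most $C+C\ln r\le 2C\ln r$ for $r\ge e$, which proves the claim with the constant $2C$.

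The only delicate point is justifying the exchange of integrals in the Fubini form. I would carry it out directly: write
\[
\int_{\sqrt r\le|Y|\le r} g\,|Y|^{-2}\,dx=\int_{\sqrt r\le|Y|\le r} g\Big(r^{-2}+\int_{|Y|}^r 2\tau^{-3}\,d\tau\Big)dx .
\]
By Tonelli (everything is nonnegative) this equals
\[
r^{-2}\mu_{\sqrt r,r}+2\int_{\sqrt r}^r\tau^{-3}\int_{\sqrt r\le|Y|\le\tau}g\,dx\,d\tau,
\]
where $\mu_{\sqrt r,r}$ denotes the integral of $g$ over the annulus $\{\sqrt r\le|Y|\le r\}$. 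The inner integral is bounded by $\mu(\tau)$, so no Stieltjes calculus is needed and the argument is elementary.
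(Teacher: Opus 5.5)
Your proof is correct and is essentially the paper's argument: the paper also writes $|Y|^{-2}=r^{-2}+2\int_{|Y|}^r s^{-3}\,ds$, exchanges the order of integration to bound $2\int_{\sqrt r}^r s^{-3}\int_{\sqrt r\le|Y|\le s}g\,dx\,ds$ by $C\ln r$, and bounds the leftover $r^{-2}$-term by a constant using \eqref{proprieta}. The Lebesgue--Stieltjes formulation you give first is just a repackaging of this same Tonelli computation.
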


\begin{proof}
      Since 
      \begin{equation}
2\int_{|Y|}^{r}s^{-3}\,ds=|Y|^{-2}-r^{-2},
      \end{equation}
using the Fubini theorem and by \eqref{proprieta}, we get 
\begin{equation}\label{effepi}
\begin{split}
   &\int_{\sqrt{r}\le |Y|\le r} B'(H(\nabla v))H(\nabla v)(|Y|^{-2}-r^{-2}) \\ &= 2\int_{|Y|}^{r}\int_{\sqrt{r}\le |Y|\le r} B'(H(\nabla v))H(\nabla v)s^{-3}\, ds\,dx \\&=2 \int_{\sqrt r}^r s^{-3}\int_{\sqrt r\le |Y|\le s} B'(H(\nabla v))H(\nabla v)dxds \\ &\le C \int_{\sqrt r}^r s^{-1} ds \le C\ln r
   \end{split}
    \end{equation}
    if $r$ is large enough.
By \eqref{effepi}, using the fact that $|Y|^{-2}=(|Y|^{-2}-r^{-2})+r^{-2}$ and \eqref{proprieta} we have 

\begin{equation}\label{effepiu}
\begin{split}
   &\int_{\sqrt{r}\le |Y|\le r} B'(H(\nabla v))H(\nabla v)|Y|^{-2} \\ &\le  \int_{\sqrt{r}\le |Y|\le r} B'(H(\nabla v))H(\nabla v)r^{-2}+C\ln r \\& \le C+C\ln r,
   \end{split}
    \end{equation}
where $C$ is a positive constant.

\end{proof}

\begin{proof}[Proof of Theorem \ref{DegiorgiN=3}]
Since $\underline u$ and $\overline u$ are functions on $\R^2$ using  \eqref{formulafondamentale2}, we have 
$$k_{1,\underline u}=k_{1,\overline u}=0=\nabla _{L_{\underline u,x'}}\underline H(\nabla \underline u)=\nabla _{L_{\overline u,x'}}\overline H(\nabla \overline u).$$
Therefore $\underline u$ and $\overline u$ have one-dimensional symmetry.

Now we prove that $u$ satisfies \eqref{proprieta}. By \eqref{definizioni} and by assumptions $(h_B)$-$(iii)$  we remark that
\begin{equation}\label{daniel2}
    B'(t)t-\Lambda_2(t)=\Lambda_1(t),\qquad \text{and}\qquad \Lambda_1(t)\le C(B)\Lambda_2(t)
\end{equation}
for any $t\ge 0$, and where $C(B)$ is a positive constant depending on $B$.
Using
Lemma \ref{energy} and by \eqref{daniel2}, we get 
\begin{equation}\label{a}
\begin{split}
    &\int_{|(x,u(x))|\le R} B'(H(\nabla u))H(\nabla u) \\ &\le \int_{B_R} B'(H(\nabla u))H(\nabla u) \le C(B) \int_{B_R} \Lambda_2(H(\nabla u)) \\ & \le C(B)\int_{B_R}\Lambda_2(H(\nabla u))-F(u)+c_u \le C(B,H,N,u) R^2,
    \end{split}
\end{equation}
where $C(B,H,N,u)$ is a positive constant depending on $B,H,N$ and $u$.

Since $\nabla u\in L^{\infty}(\R^N)$, we have 
\begin{equation}\label{b}
H(\nabla u)^4B'(H(\nabla u))\le C(u,H) B'(H(\nabla u))H(\nabla u),
\end{equation}
where $C(u,H)$ is a positive constant depending on $u$ and $H$.

Fixed $R>0$, we now consider the function
\begin{equation}\label{phi}
    \varphi_R(x):=\left\{\begin{array}{llll}
                1 &\text{if } |Y|<\sqrt{R}\\
                \frac{2\ln(R/|Y|)}{\ln R} &\text{if } \sqrt{R}<|Y|<R\\
                 0 \quad &\text{if } |Y|\ge R.\\
                 \end{array}\right.
\end{equation}

By construction and using the fact that $H$ is a norm equivalent to euclidean norm we have 
\begin{equation}\label{gradient}
    |\nabla \varphi_R(x)|^2 \le \tilde C_H\frac{|x|^2+u(x)^2H(\nabla u)^2}{|Y|^4(\ln R)^2},
\end{equation}
where $\tilde C_H$ is a positive constant depending on $H$.

Since $H$ is $1$-homogeneous function, there exist positive constants $M,\overline{M}$ such that 
\begin{equation}\label{D2H}
    |\nabla H(\xi)|\le M,\qquad|D^2H(\xi)|\le \frac{\overline M}{H(\xi)}\quad\forall \xi\in \R^N\setminus \{0\}.
\end{equation}

Taking \eqref{phi} in \eqref{formulafondamentale}, by \eqref{a}, \eqref{b}, \eqref{gradient}, \eqref{D2H}, by Lemma \ref{lemmaaaa} and by assumptions $(h_B)$-$(iii)$, we get
\begin{equation}
       \begin{split}
       & \int_{\{\nabla u\neq 0\}\cap B_{R}} \frac{B'(H(\nabla u))}{H(\nabla u))}  |\nabla u|^2 \sum_{i=1}^{N-1} k_{i,u}^2  +B''(H(\nabla u)) \left|\nabla _{L_{u,x}}H(\nabla u)\right|^2\\ &\le C_H\int_{\R^3} \langle A(\nabla u)\nabla \varphi,\nabla \varphi\rangle|\nabla u|^2 \\ &=C_H\int_{\R^3} \left(B''(H(\nabla u))  \langle \nabla H(\nabla
u), \nabla \varphi\rangle^2 +B'(H(\nabla
u))\langle D^2H(\nabla u)\nabla \varphi,\nabla \varphi \rangle\right)|\nabla u|^2\\&\le C_H\int_{\R^3} \left(M^2B''(H(\nabla u)) +\overline M\frac{B'(H(\nabla
u))}{H(\nabla u)}\right)|\nabla \varphi|^2|\nabla u|^2 \\ &\le C(H,B)\int_{B_R\setminus B_{\sqrt{R}}} \frac{H(\nabla u)^2B'(H(\nabla u))|x|^2}{|Y|^4(\ln R)^2}+ C(H,B)\int_{B_R\setminus B_{\sqrt{R}}} \frac{H(\nabla u)^4B'(H(\nabla u))|u|^2}{|Y|^4(\ln R)^2}\\ &\le C(H,B)\int_{B_R\setminus B_{\sqrt{R}}} \frac{H(\nabla u)B'(H(\nabla u))}{|Y|^2(\ln R)^2}+ C(H,B,u)\int_{B_R\setminus B_{\sqrt{R}}} \frac{H(\nabla u)B'(H(\nabla u))}{|Y|^2\ln R^2} \\ &\le C(H,B,N,u)\frac{\ln R}{(\ln R)^2},
    \end{split}
\end{equation}
where $C(H,B,N,u)$ is a positive constant depending on $H,B,N$ and $u$.
For $R$ arbitrarily large we obtain that $u$ possesses one-dimensional symmetry. 
\end{proof}

\section*{Acknowledgements}
We would like to thank Prof. Alberto Farina for the very useful conversations on the
paper.

\section*{Data availability statement}
All data generated or analyzed during this study are included in this published article.

\end{document}